\documentclass[12pt, reqno]{amsart}
\usepackage{amsmath, amsthm, amscd, amsfonts, amssymb, graphicx, color}
\usepackage[bookmarksnumbered, colorlinks, plainpages]{hyperref}
\hypersetup{colorlinks=true,linkcolor=red, anchorcolor=green, citecolor=cyan, urlcolor=red, filecolor=magenta, pdftoolbar=true}
\input{mathrsfs.sty}
\usepackage{xcolor}
\textheight 22.90truecm \textwidth 15.5truecm
\setlength{\oddsidemargin}{0.30in}\setlength{\evensidemargin}{0.30in}
\setlength{\topmargin}{-.5cm}

\usepackage{xcolor}
\usepackage{ulem}

\newtheorem{theorem}{Theorem}[section]
\newtheorem{lemma}[theorem]{Lemma}
\newtheorem{proposition}[theorem]{Proposition}
\newtheorem{cor}[theorem]{Corollary}
\theoremstyle{definition}

\newtheorem{example}[theorem]{Example}

\theoremstyle{remark}
\newtheorem{remark}[theorem]{\bf{Remark}}
\numberwithin{equation}{section}
\begin{document}
		\title [Inequalities involving Berezin norm and  Berezin number]{Inequalities involving Berezin norm and  Berezin number} 
	
	\author[ P. Bhunia, K. Paul, A. Sen] {Pintu Bhunia, Kallol Paul, Anirban Sen}

	\address [Bhunia] {Department of Mathematics, Jadavpur University, Kolkata 700032, West     Bengal, India}
	\email{pintubhunia5206@gmail.com}
	\email{pbhunia.math.rs@jadavpuruniversity.in}

	\address[Paul] {Department of Mathematics, Jadavpur University, Kolkata 700032, West Bengal, India}
	\email{kalloldada@gmail.com}
	\email{kallol.paul@jadavpuruniversity.in}
	
		\address [Sen] {Department of Mathematics, Jadavpur University, Kolkata 700032, West Bengal, India}
	\email{anirbansenfulia@gmail.com}
	
	\thanks{Mr. Pintu Bhunia sincerely acknowledges the financial support received from UGC, Govt. of India in the form of Senior Research Fellowship under the mentorship of Prof Kallol Paul. Mr. Anirban Sen would like to thank CSIR, Govt. of India for the financial support in the form of Junior Research Fellowship under the mentorship of Prof Kallol Paul. }
	
	\subjclass{47A30, 15A60, 47A12}
	
	\keywords{Berezin norm, Berezin number, Reproducing kernel Hilbert space}
	
	\maketitle
	\begin{abstract}
		We obtain  new inequalities involving Berezin norm and Berezin number of bounded linear operators defined on a reproducing kernel Hilbert space $\mathscr{H}.$  Among many  inequalities obtained here, it is shown that if $A$ is a positive bounded linear operator on $\mathscr{H}$, then $\|A\|_{ber}=\textbf{ber}(A)$, where $\|A\|_{ber}$ and $\textbf{ber}(A)$ are the Berezin norm and Berezin number of $A$, respectively. In contrast to the numerical radius, this equality does not hold  for selfadjoint operators, which highlights the necessity of studying Berezin number inequalities independently. 
		
	\end{abstract}

	\section{\textbf{Introduction}}
	\noindent 
	Let $\mathbb{B}(\mathbb{H})$ denote the $C^*$-algebra of all bounded linear operators on a complex Hilbert space $\mathbb{H}$ with the usual inner product $\langle.,.\rangle$, and $\|\cdot \|$ is the norm induced by the inner product $\langle.,.\rangle$. The alphabet $I$ stands for the identity operator on $\mathbb{H}$. 
	An operator $A \in \mathbb{B}(\mathbb{H})$ is positive if and only if $\langle Ax,x \rangle \geq 0$ for all $x \in \mathbb{H}$, and we write $A \geq 0.$   Let $A\in \mathbb{B}(\mathbb{H}).$ The adjoint  of $A$ is denoted by $A^*$ and  $|A|$ denotes the positive operator $(A^*A)^{1/2}.$
	The Cartesian decomposition of $A$ is given by $A=\Re(A)+ { i }\Im(A),$ where $\Re(A)$ and $\Im(A)$ denote the real part and the imaginary part of $A$, respectively, i.e., $ \Re(A) = \frac{ A + A^*}{2} $ and $ \Im(A) = \frac{A - A^*}{2i}.$
	For $A\in \mathbb{B}(\mathbb{H})$, the numerical range of $A$, denoted as $W(A)$, is the collection of complex scalars $\langle Ax,x \rangle$ for $x\in \mathbb{H}$ with $\|x\|=1$. More precisely, 
	$$ W(A)=\left\{ \langle Ax,x \rangle :  x \in \mathbb{H}, \|x\|=1\right\}.$$ 
	The numerical radius and the usual operator norm of $A$ are denoted by $w(A)$ and $\|A\|$, respectively. Recall that
\begin{eqnarray*}
		 w(A) &=& \sup \left\{\big |\langle Ax,x \rangle\big| :  x \in \mathbb{H}, \|x\|=1\right\}
\end{eqnarray*}	  
and
\begin{eqnarray*}	
\|A\| &=& \sup \left\{\big|\langle Ax,y \rangle\big| :  x,y \in \mathbb{H}, \|x\|= \|y\|=1 \right\}.
\end{eqnarray*}	
It is easy to verify that $w(.)$ defines a norm on  $\mathbb{B}(\mathbb{H})$ and is equivalent to the usual operator norm. In particular, for all  $A \in  \mathbb{B}(\mathbb{H})$, the following inequality holds 
\begin{eqnarray}\label{eq0}
	\frac{1}{2}\|A\| \leq w(A) \leq \|A\|. 
\end{eqnarray}
For the latest and recent improvements of the above inequalities in \eqref{eq0} one can see \cite{B_RM, B_LAA, B_BM, B_AM} and references therein.

A reproducing kernel Hilbert space (RKHS in short) $\mathscr{H}=\mathscr{H}(\Omega)$ is a Hilbert space of all complex valued functions on a non-empty set $\Omega,$ which has the property that for every $\lambda \in \Omega$ the map $ E_{\lambda} : \mathscr{H} \to \mathbb{C}$  defined by $E_{\lambda}(f)=f(\lambda),$ is continuous linear functional on $\mathscr{H}.$  Throughout the article, a reproducing kernel Hilbert space on the set $\Omega$ is denoted by $\mathscr{H}.$ By the Riesz representation theorem, for each $\lambda \in \Omega$ there exists a unique function $k_{\lambda} \in \mathscr{H}$ such that $f(\lambda)=\langle f,k_{\lambda} \rangle$ for all $f \in \mathscr{H}.$ The collection of functions  $\{k_\lambda :  \lambda \in \Omega \}$ is called  the reproducing kernel of $\mathscr{H}.$ The normalized reproducing kernel of $\mathscr{H}$ is the collection of functions $\{\hat{k}_{\lambda}=k_\lambda/\|k_\lambda\| :  \lambda \in \Omega\}.$ Let  $A \in  \mathbb{B}(\mathscr{H}).$ The function $\widetilde{A}$ defined on $\Omega$ by $\widetilde{A}(\lambda)=\langle A\hat{k}_{\lambda},\hat{k}_{\lambda} \rangle$,  is called the Berezin symbol of $A.$ The Berezin set of $A$, denoted as $\textbf{Ber}(A),$ is defined as $\textbf{Ber}(A)=\{\widetilde{A}(\lambda) : \lambda \in \Omega\}$. 
The Berezin number and Berezin norm of $A$ denoted as $\textbf{ber}(A)$ and $\|A\|_{ber},$ respectively are defined as 
	\begin{eqnarray*}
		  \textbf{ber}(A) &=& \sup \left\{ \big| \widetilde{A}(\lambda) \big| : \lambda \in \Omega\right\}=\sup \left\{ \big| \langle A\hat{k}_{\lambda},\hat{k}_{\lambda} \rangle \big| : \lambda \in \Omega\right\}
		\end{eqnarray*}
	and
	\begin{eqnarray*}
		\|A\|_{ber}&=&\sup \left\{ \big|\langle A\hat{k}_{\lambda},\hat{k}_{\mu} \rangle\big| : \lambda, \mu \in \Omega\right\}.
	\end{eqnarray*}
For $A,B \in  \mathbb{B}(\mathscr{H})$ it is clear from the definition of the Berezin number and the Berezin norm that the following properties hold: 
	\begin{eqnarray*}
		&&(i)~~\textbf{ber}(\alpha A) = | \alpha |  \textbf{ber}(A)~~ \mbox{for all}~~  \alpha  \in \mathbb{C},\\
		&&(ii)~~\textbf{ber}(A + B) \leq \textbf{ber}(A)  + \textbf{ber}(B),\\&&(iii)~~\textbf{ber}(A) \leq \|A\|_{ber},\\&&(iv)~~ \|\alpha A\|_{ber}=|\alpha|\|A\|_{ber}~~ \mbox{for all}~~  \alpha  \in \mathbb{C},\\&&(v)~~ \|A+B\|_{ber} \leq \|A\|_{ber}+\|B\|_{ber},\\&& (vi)~~\|A\|_{ber}=\|A^*\|_{ber}\,\,\, \text{and} \,\,\, \textbf{ber}(A)=\textbf{ber}(A^*).
 \end{eqnarray*}	
Also, it is clear that  for $A\in \mathbb{B}(\mathscr{H}),$ $$ \textbf{Ber}(A) \subseteq W(A), \textbf{ber}(A) \leq w(A) ~~\mbox{and}~~\|A\|_{ber} \leq \|A\|.$$ 
The Berezin symbol has been studied in details for Toeplitz and Hankel operators on Hardy and Bergman spaces. The Berezin number inequalities have been studied by many mathematicians over the years, the interested readers can see \cite{B1,b0,G1,hlb,Y1}.

In this paper, we obtain generalized inequalities involving Berezin norm and Berezin number of reproducing kernel Hilbert space operators.	As special cases, we derive several inequalities that refine the existing ones. Further, we also give some usual operator norm inequalities of complex Hilbert space operators.

\section{\textbf{The Berezin norm inequalities}}

We start with the following lemmas that will be used to develop new results in this article. 
	
		\begin{lemma}$($\cite{Simon}$)$\label{lemma1}
		Let $A\in \mathbb{B}(\mathscr{H})$ be positive, and let $x\in \mathscr{H}$ with $\|x\|=1.$ Then 
		\begin{eqnarray*}
			\langle Ax,x\rangle^r\leq \langle A^rx,x\rangle~~\,\, \text{for all}~~ r\geq 1.
		\end{eqnarray*}
	\end{lemma}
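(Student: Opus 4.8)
The plan is to reduce the operator inequality to a scalar convexity statement via the spectral theorem. Since $A$ is positive, its spectrum $\sigma(A)$ is contained in $[0,\infty)$, so I would write $A=\int_{\sigma(A)} t\, dE(t)$, where $E$ denotes the spectral measure of $A$. Fixing the unit vector $x$, I would then introduce the scalar measure $d\mu(t)=d\langle E(t)x,x\rangle$ on $\sigma(A)$. The crucial observation is that, because $\|x\|=1$, the measure $\mu$ is a \emph{probability} measure: its total mass is $\langle E(\sigma(A))x,x\rangle=\langle x,x\rangle=1$.

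With this setup in place, the functional calculus gives
\begin{eqnarray*}
\langle Ax,x\rangle=\int_{\sigma(A)} t\, d\mu(t)\qquad\text{and}\qquad \langle A^r x,x\rangle=\int_{\sigma(A)} t^r\, d\mu(t),
\end{eqnarray*}
so the desired inequality becomes
\begin{eqnarray*}
\left(\int_{\sigma(A)} t\, d\mu(t)\right)^r\leq \int_{\sigma(A)} t^r\, d\mu(t).
\end{eqnarray*}
This is precisely Jensen's inequality applied to the function $\varphi(t)=t^r$ against the probability measure $\mu$. Since $r\geq 1$ and $\sigma(A)\subseteq[0,\infty)$, the map $\varphi$ is convex on the relevant domain, and Jensen's inequality delivers exactly the claimed bound, valid for all real $r\geq 1$ simultaneously.

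The argument is essentially routine, and the two points that genuinely require the stated hypotheses are the only places to be careful. The normalization $\|x\|=1$ is exactly what makes $\mu$ a probability measure; without it the Jensen step fails. The assumption $r\geq 1$ is precisely what guarantees convexity of $t\mapsto t^r$ on $[0,\infty)$, and for $0<r<1$ the function is concave and the inequality would reverse. As an alternative route one could instead first treat integer exponents, using the Cauchy--Schwarz consequence $\langle A^{k}x,x\rangle^{2}\le \langle A^{k-1}x,x\rangle\langle A^{k+1}x,x\rangle$ to establish log-convexity of the moments and hence $\langle Ax,x\rangle^{n}\le\langle A^{n}x,x\rangle$, and then pass to general real $r\geq 1$ by approximation; but this is a detour, and the spectral-theoretic Jensen approach is cleaner. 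I do not expect any serious obstacle beyond correctly bookkeeping the probability-measure normalization.
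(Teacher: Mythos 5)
Your proof is correct. Note that the paper itself offers no proof of this lemma: it is quoted as a known result (the H\"older--McCarthy inequality) with a citation to Simon's book, and the spectral-theorem-plus-Jensen argument you give is precisely the standard proof of that cited result, so there is nothing to reconcile. The two hypotheses you flag ($\|x\|=1$ making $\mu$ a probability measure, and $r\geq 1$ giving convexity of $t\mapsto t^r$ on $[0,\infty)$) are indeed exactly where the statement would otherwise fail.
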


    \begin{lemma}$($\cite{F}$)$\label{lemma2}
    	Let $A\in \mathbb{B}(\mathscr{H})$, and let $x, y\in \mathscr{H}$. Then 
	   \[|\langle Ax,y\rangle|^2\leq \langle |A|^{2\alpha}x,x\rangle \langle |A^*|^{2(1-\alpha)}y,y\rangle \,\,\,\, \text{for all  $\alpha \in [0, 1]$}.\]
   \end{lemma}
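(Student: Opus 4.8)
The plan is to reduce the statement to the ordinary Cauchy--Schwarz inequality after splitting the positive operator $|A|$ by means of the polar decomposition of $A$. Write $A = U|A|$, where $|A| = (A^*A)^{1/2}$ and $U$ is the partial isometry with initial space $\overline{\mathrm{ran}}\,|A|$ and final space $\overline{\mathrm{ran}}\,A$. First I would record the two facts that drive the argument: the factorization $|A| = |A|^{1-\alpha}|A|^{\alpha}$ coming from the continuous functional calculus for the positive operator $|A|$, and the intertwining identity $|A^{*}| = U|A|U^{*}$, which relates $|A^{*}|$ to $|A|$ through $U$ and will be needed in the fractional form $|A^{*}|^{2(1-\alpha)} = U|A|^{2(1-\alpha)}U^{*}$.

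Granting these, the core computation is short. For $x,y \in \mathscr{H}$ and $\alpha \in (0,1)$ I would write
\[
\langle Ax, y\rangle = \langle U|A|^{1-\alpha}|A|^{\alpha}x, y\rangle = \langle |A|^{\alpha}x,\, |A|^{1-\alpha}U^{*}y\rangle,
\]
where the last equality uses that $|A|^{1-\alpha}$ is self-adjoint, so that the adjoint of $U|A|^{1-\alpha}$ is $|A|^{1-\alpha}U^{*}$. Applying Cauchy--Schwarz to the right-hand side gives
\[
|\langle Ax,y\rangle|^{2} \le \||A|^{\alpha}x\|^{2}\,\||A|^{1-\alpha}U^{*}y\|^{2} = \langle |A|^{2\alpha}x,x\rangle\,\langle U|A|^{2(1-\alpha)}U^{*}y, y\rangle,
\]
and the second factor equals $\langle |A^{*}|^{2(1-\alpha)}y, y\rangle$ by the fractional intertwining identity. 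This yields exactly the claimed bound.

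The main obstacle is justifying $|A^{*}|^{2(1-\alpha)} = U|A|^{2(1-\alpha)}U^{*}$ for a non-integer exponent, rather than merely the integer-power relation $|A^{*}|^{2} = U|A|^{2}U^{*} = AA^{*}$. I would establish it via functional calculus: since $U$ is a partial isometry, $P := U^{*}U$ is the orthogonal projection onto $\overline{\mathrm{ran}}\,|A|$, whence $P|A| = |A|$ and therefore $(U|A|^{2}U^{*})^{n} = U|A|^{2n}U^{*}$ for every positive integer $n$, the intermediate factors $U^{*}U = P$ being absorbed because $P$ fixes the range of $|A|$. Extending from polynomials to the continuous function $t \mapsto t^{\,1-\alpha}$ on the spectrum via the continuous functional calculus then gives the fractional identity. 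Finally I would dispose of the endpoints: under the convention $B^{0} = I$, the cases $\alpha = 1$ and $\alpha = 0$ collapse to the plain Cauchy--Schwarz estimates $|\langle Ax,y\rangle|^{2} \le \|Ax\|^{2}\|y\|^{2}$ and $|\langle Ax,y\rangle|^{2} \le \|x\|^{2}\|A^{*}y\|^{2}$, so the entire range $\alpha \in [0,1]$ is covered.
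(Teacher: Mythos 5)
The paper does not prove this lemma at all; it is quoted from Furuta \cite{F}, where the classical argument is precisely the one you give: polar decomposition $A=U|A|$, the splitting $|A|=|A|^{1-\alpha}|A|^{\alpha}$, Cauchy--Schwarz applied to $\langle Ax,y\rangle=\langle |A|^{\alpha}x,\,|A|^{1-\alpha}U^{*}y\rangle$, and the intertwining identity $|A^{*}|^{2(1-\alpha)}=U|A|^{2(1-\alpha)}U^{*}$. Your proof is correct, including the justification of the fractional intertwining (which goes through exactly because $t\mapsto t^{1-\alpha}$ vanishes at $0$, so approximating by polynomials with zero constant term --- the only ones for which $p(U|A|^{2}U^{*})=Up(|A|^{2})U^{*}$ holds --- suffices), and the endpoint cases $\alpha\in\{0,1\}$ are handled properly.
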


    \begin{lemma}$($\cite{hard}$)$.\label{lemma3}
   	For $a,b\geq 0,$ $0<\alpha<1$ and $r\neq 0,$ let $M_{r}(a,b,\alpha)=\\\left(\alpha a^r+(1-\alpha)b^r \right)^{1/r}$ and $M_{0}(a,b,\alpha)=a^{\alpha}b^{1-\alpha}.$ Then 
   	$$M_{r}(a,b,\alpha)\leq M_{s}(a,b,\alpha) \,\,\,\,\text{for $r \leq s$}. $$ 
   \end{lemma}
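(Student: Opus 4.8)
The plan is to prove that $r \mapsto M_r(a,b,\alpha)$ is non-decreasing by splitting into the three regimes $0 < r \le s$, $\ r \le s < 0$, and the cases that touch $0$ (namely $r=0<s$, $r<0=s$, and $r<0<s$), treating each separately. The single tool that underlies every case involving $0$ is the weighted arithmetic--geometric mean inequality $x^{\alpha} y^{1-\alpha} \le \alpha x + (1-\alpha) y$ for $x,y \ge 0$, which itself follows from the concavity of $\log$; throughout, the recurring subtlety will be to track how raising to a power $1/r$ or $1/s$ acts on an inequality.

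First I would dispose of the principal regime $0 < r \le s$. Here $s/r \ge 1$, so the function $\phi(t) = t^{s/r}$ is convex on $[0,\infty)$, and applying Jensen's inequality to the convex combination of $a^r$ and $b^r$ with weights $\alpha$ and $1-\alpha$ gives
\[
\bigl(\alpha a^r + (1-\alpha) b^r\bigr)^{s/r} \le \alpha (a^r)^{s/r} + (1-\alpha)(b^r)^{s/r} = \alpha a^s + (1-\alpha) b^s.
\]
Raising both sides to the power $1/s > 0$, an order-preserving operation, yields $M_r(a,b,\alpha) \le M_s(a,b,\alpha)$ precisely. This convexity step is the conceptual core of the argument.

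For the negative regime $r \le s < 0$ (with $a,b>0$; the degenerate cases where an entry is $0$ are handled by the usual convention $M_r=0$ and continuity), I would reduce to the case just settled via the reciprocal identity $M_r(a,b,\alpha) = 1/M_{-r}(1/a, 1/b, \alpha)$, which is checked by direct substitution. Since $0 < -s \le -r$, the first case gives $M_{-s}(1/a,1/b,\alpha) \le M_{-r}(1/a,1/b,\alpha)$, and taking reciprocals reverses the inequality to produce $M_r(a,b,\alpha) \le M_s(a,b,\alpha)$.

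Finally, I would handle the cases in which $r$ or $s$ equals $0$, or in which $r<0<s$, by chaining through the geometric mean $M_0$. For $s>0$, weighted AM--GM applied to $a^s,b^s$ gives $(a^{\alpha}b^{1-\alpha})^s = (a^s)^{\alpha}(b^s)^{1-\alpha} \le \alpha a^s + (1-\alpha)b^s$, and raising to $1/s$ yields $M_0 \le M_s$; for $r<0$, applying AM--GM to $a^r,b^r$ and using that $t \mapsto t^{1/r}$ is order-reversing yields $M_r \le M_0$. Combining these gives $M_r \le M_0 \le M_s$ whenever $r \le 0 \le s$, which finishes the monotonicity. The main obstacle is not any single estimate but the careful bookkeeping of inequality directions when passing through the exponents $1/r$ and $1/s$ — order-preserving for positive exponents, order-reversing for negative ones — together with correctly stitching the positive, negative, and boundary regimes; once the weighted AM--GM and Jensen steps are in place, the rest is routine verification.
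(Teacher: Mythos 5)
Your proof is correct; the only caveat is that there is no in-paper argument to compare it with, since the paper states this lemma with a citation to Hardy--Littlewood--P\'olya \cite{hard} and gives no proof of it. What you have written is essentially the classical textbook proof of the monotonicity of weighted power means, and all three regimes are handled soundly: the core case $0<r\le s$ by Jensen's inequality for the convex function $t\mapsto t^{s/r}$; the case $r\le s<0$ by the reciprocal identity $M_{r}(a,b,\alpha)=1/M_{-r}(1/a,1/b,\alpha)$ for $a,b>0$, with the degenerate situation $ab=0$ dispatched by the convention $M_{r}=0$ for $r\le 0$; and the mixed cases $r\le 0\le s$ by chaining $M_{r}\le M_{0}\le M_{s}$ through weighted AM--GM, with the order reversal under negative exponents tracked correctly. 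One observation that puts the effort in perspective: in this paper the lemma is invoked only in the proofs of Theorems \ref{theo1} and \ref{theo2}, and there only in the form $M_{1}(x,y,\tfrac{1}{2})\le M_{t}(x,y,\tfrac{1}{2})$ for $t\ge 1$, i.e., passing from an arithmetic mean of two nonnegative inner products to a $t$-power mean. So your first (Jensen) case alone already covers every use the authors make of the result; the negative and boundary regimes, which consume most of your bookkeeping, are needed only for the lemma as stated in full generality.
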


Now, we are in a position to prove a general inequality involving the Berezin norm and Berezin number, which leads to several inequalities as special cases.

\begin{theorem}\label{theo1}
	Let $A,B,C,D,X,Y\in \mathbb{B}(\mathscr{H})$, and let $\alpha \in [0,1].$ Then
	\begin{align*}
		&\left\|\frac{A^*XB+C^*YD}{2}\right\|^{2}_{ber} \\& \leq  \textbf{ber}^{1/r}\left(\frac{(B^*|X|^{2\alpha }B)^r+(D^*|Y|^{2\alpha }D)^r}{2}\right)\textbf{ber}^{1/s}\left(\frac{(A^*|X^*|^{2(1-\alpha)}A)^s +(C^*|Y^*|^{2(1-\alpha)}C)^s}{2}\right),
	\end{align*}
	for all $r,s \geq 1.$ 
\end{theorem}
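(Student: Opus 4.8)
The plan is to estimate $\left|\left\langle \frac{A^*XB+C^*YD}{2}\hat{k}_\lambda, \hat{k}_\mu\right\rangle\right|$ for arbitrary normalized reproducing kernels $\hat{k}_\lambda, \hat{k}_\mu$ and then take the supremum over $\lambda,\mu\in\Omega$. Writing $x=\hat{k}_\lambda$ and $y=\hat{k}_\mu$, I would first move the operators $A^*$ and $C^*$ across the inner product and apply the triangle inequality to obtain $\left|\left\langle \frac{A^*XB+C^*YD}{2}x, y\right\rangle\right| \le \frac{1}{2}|\langle XBx, Ay\rangle| + \frac{1}{2}|\langle YDx, Cy\rangle|$.

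Next I would apply the mixed Schwarz inequality (Lemma \ref{lemma2}) to each of the two terms, using the operator $X$ (respectively $Y$) together with the vectors $Bx, Ay$ (respectively $Dx, Cy$). This yields $|\langle XBx, Ay\rangle| \le \langle B^*|X|^{2\alpha}Bx, x\rangle^{1/2}\langle A^*|X^*|^{2(1-\alpha)}Ay, y\rangle^{1/2}$ and an analogous bound for the second term. Setting the positive operators $P=B^*|X|^{2\alpha}B$, $Q=D^*|Y|^{2\alpha}D$, $R=A^*|X^*|^{2(1-\alpha)}A$, and $S=C^*|Y^*|^{2(1-\alpha)}C$, the Cauchy--Schwarz inequality applied to the two-term sum then gives $\left|\left\langle \frac{A^*XB+C^*YD}{2}x, y\right\rangle\right|^2 \le \langle \frac{P+Q}{2}x,x\rangle\,\langle \frac{R+S}{2}y,y\rangle$, where the factor $\tfrac14$ produced by squaring distributes as $\tfrac12$ onto each averaged operator.

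The final and most delicate step is to pass from these averaged first-power quantities to the $r$-th and $s$-th Berezin numbers. For the first factor I would apply Lemma \ref{lemma3} with weight $\alpha=\tfrac12$ (comparing $M_1$ with $M_r$) to the nonnegative scalars $\langle Px,x\rangle$ and $\langle Qx,x\rangle$, giving $\langle \frac{P+Q}{2}x,x\rangle^r \le \frac{\langle Px,x\rangle^r + \langle Qx,x\rangle^r}{2}$; then Lemma \ref{lemma1} applied to the positive operators $P$ and $Q$ yields $\langle Px,x\rangle^r \le \langle P^r x,x\rangle$ and $\langle Qx,x\rangle^r \le \langle Q^r x,x\rangle$. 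Combining these and recalling $x=\hat{k}_\lambda$ shows $\langle \frac{P+Q}{2}\hat{k}_\lambda,\hat{k}_\lambda\rangle \le \textbf{ber}^{1/r}\!\left(\frac{P^r+Q^r}{2}\right)$, where positivity of $\frac{P^r+Q^r}{2}$ lets the Berezin symbol be bounded by the Berezin number. The symmetric argument with $R, S, s$ and $y=\hat{k}_\mu$ handles the second factor. Substituting these bounds and taking the supremum over $\lambda,\mu\in\Omega$ gives the claimed inequality.

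I expect the bookkeeping in this last step to be the main obstacle: one must keep track of which positive operator is raised to which power, verify that only $r,s\ge 1$ are needed (this is precisely the regime in which both the convexity step via Lemma \ref{lemma3} and the operator power inequality of Lemma \ref{lemma1} are valid), and recognize that the product structure lets the two variables $x$ and $y$, and hence the two suprema defining the two Berezin numbers, be treated completely independently.
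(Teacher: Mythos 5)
Your proposal is correct and follows essentially the same route as the paper's proof: triangle inequality after moving $A^*$ and $C^*$ across the inner product, the mixed Schwarz inequality (Lemma \ref{lemma2}), the elementary Cauchy--Schwarz bound $(ab+cd)^2\leq(a^2+c^2)(b^2+d^2)$, then the power-mean inequality (Lemma \ref{lemma3}) followed by the H\"older--McCarthy inequality (Lemma \ref{lemma1}), and finally the supremum over $\lambda,\mu\in\Omega$. The only cosmetic difference is your introduction of the shorthand operators $P,Q,R,S$, which does not change the argument.
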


\begin{proof}
	Let $\hat{k}_{\lambda}$ and $\hat{k}_{\mu}$ be two normalized reproducing kernel of $\mathscr{H}.$ Then, 
	\begin{align*}
		& \frac{1}{4}|\langle (A^*XB+C^*YD)\hat{k}_{\lambda},\hat{k}_{\mu} \rangle |^2\\
		&\leq \frac{1}{4}(|\langle A^*XB\hat{k}_{\lambda},\hat{k}_{\mu} \rangle |+|\langle C^*YD \hat{k}_{\lambda},\hat{k}_{\mu} \rangle |)^2 \\
		&= \frac{1}{4}(|\langle XB\hat{k}_{\lambda},A\hat{k}_{\mu} \rangle |+|\langle YD \hat{k}_{\lambda},C\hat{k}_{\mu} \rangle |)^2\\
		&\leq \frac{1}{4} \Big(\langle |X|^{2\alpha}B\hat{k}_{\lambda},B\hat{k}_{\lambda}\rangle^{1/2} \langle |X^*|^{2(1-\alpha)}A\hat{k}_{\mu},A\hat{k}_{\mu}\rangle^{1/2}\\
		&\,\,\,\,\,\,\,\,\,\,\,\,+\langle|Y|^{2\alpha}D\hat{k}_{\lambda},D\hat{k}_{\lambda}\rangle^{1/2} \langle |Y^*|^{2(1-\alpha)}C\hat{k}_{\mu},C\hat{k}_{\mu}\rangle^{1/2}\Big)^2\,\,\,\,\,\,\,\, \Big(\mbox{by Lemma \ref{lemma2}}\Big)\\
		&= \frac{1}{4} \Big(\langle B^*|X|^{2\alpha}B\hat{k}_{\lambda},\hat{k}_{\lambda}\rangle^{1/2} \langle A^*|X^*|^{2(1-\alpha)}A\hat{k}_{\mu},\hat{k}_{\mu}\rangle^{1/2}\\
		&\,\,\,\,\,\,\,\,\,\,\,\,\,\,+\langle D^*|Y|^{2\alpha}D\hat{k}_{\lambda},\hat{k}_{\lambda}\rangle^{1/2} \langle C^*|Y^*|^{2(1-\alpha)}C\hat{k}_{\mu},\hat{k}_{\mu}\rangle^{1/2}\Big)^2\\
		&\leq \frac{1}{4} \Big (\langle B^*|X|^{2\alpha}B\hat{k}_{\lambda},\hat{k}_{\lambda}\rangle + \langle  D^*|Y|^{2\alpha}D \hat{k}_{\lambda},\hat{k}_{\lambda}\rangle \Big)\\
		&\,\,\,\,\,\,\,\,\,\,\,\,\, \times \Big(\langle A^*|X^*|^{2(1-\alpha)}A \hat{k}_{\mu},\hat{k}_{\mu}\rangle+\langle C^*|Y^*|^{2(1-\alpha)}C \hat{k}_{\mu},\hat{k}_{\mu}\rangle \Big)\\
		& \Big(\mbox{using the inequality $(ab+cd)^2\leq(a^2+c^2)(b^2+d^2)$ for all $a,b,c,d \in \mathbb{R}$}\Big)\\
		&=  \left(\frac{\langle B^*|X|^{2\alpha}B \hat{k}_{\lambda},\hat{k}_{\lambda}\rangle + \langle D^*|Y|^{2\alpha}D \hat{k}_{\lambda},\hat{k}_{\lambda}\rangle}{2}\right)\\&\,\,\,\,\,\,\,\,\,\,\, \times \left(\frac{\langle A^*|X^*|^{2(1-\alpha)}A \hat{k}_{\mu},\hat{k}_{\mu}\rangle+\langle C^*|Y^*|^{2(1-\alpha)}C \hat{k}_{\mu},\hat{k}_{\mu}\rangle}{2}\right) \\& \leq  \left(\frac{\langle B^*|X|^{2\alpha}B\hat{k}_{\lambda},\hat{k}_{\lambda}\rangle^r + \langle D^*|Y|^{2\alpha}D \hat{k}_{\lambda},\hat{k}_{\lambda}\rangle^r}{2}\right)^{1/r}\\&\,\,\,\,\,\,\,\,\,\, \times \left(\frac{\langle A^*|X^*|^{2(1-\alpha)}A\hat{k}_{\mu},\hat{k}_{\mu}\rangle^s+\langle C^*|Y^*|^{2(1-\alpha)}C\hat{k}_{\mu},\hat{k}_{\mu}\rangle^s}{2}\right)^{1/s}\,\,\,\,\,\,\,\, \Big(\mbox{by Lemma \ref{lemma3}}\Big)\\& \leq  \left(\frac{\langle (B^*|X|^{2\alpha }B)^r\hat{k}_{\lambda},\hat{k}_{\lambda}\rangle + \langle (D^*|Y|^{2\alpha }D)^r\hat{k}_{\lambda},\hat{k}_{\lambda}\rangle}{2}\right)^{1/r}\\&\,\,\,\,\,\,\,\,\, \times \left(\frac{\langle  (A^*|X^*|^{2(1-\alpha)}A)^s \hat{k}_{\mu},\hat{k}_{\mu}\rangle+\langle (C^*|Y^*|^{2(1-\alpha)}C)^s \hat{k}_{\mu},\hat{k}_{\mu}\rangle}{2}\right)^{1/s}\,\,\,\,\,\, \Big(\mbox{by Lemma \ref{lemma1}}\Big)\\&=  \,\left\langle \left(\frac{(B^*|X|^{2\alpha }B)^r+(D^*|Y|^{2\alpha }D)^r}{2}\right) \hat{k}_{\lambda},\hat{k}_{\lambda}\right\rangle^{1/r}\\& \,\,\,\,\,\,\,\,\,\, \times \left \langle \left(\frac{(A^*|X^*|^{2(1-\alpha)}A)^s +(C^*|Y^*|^{2(1-\alpha)}C)^s}{2}\right)\hat{k}_{\mu},\hat{k}_{\mu}\right \rangle^{1/s}\\ &\leq \textbf{ber}^{1/r}\left(\frac{(B^*|X|^{2\alpha }B)^r+(D^*|Y|^{2\alpha }D)^r}{2}\right)\textbf{ber}^{1/s}\left(\frac{(A^*|X^*|^{2(1-\alpha)}A)^s +(C^*|Y^*|^{2(1-\alpha)}C)^s}{2}\right).
	\end{align*}
	Therefore, taking the supremum over all $\lambda,\mu\in\Omega$, we get the desired inequality.

\end{proof}

The following corollary follows easily from Theorem \ref{theo1} by taking $A=B=C=D=I,$ $X=A$ and $Y=B.$

\begin{cor}\label{cor1}
	Let $A,B\in \mathbb{B}(\mathscr{H})$, and let $\alpha \in [0,1].$ Then
	\begin{eqnarray*}
		\left\|\frac{A+B}{2}\right\|^{2}_{ber} \leq \textbf{ber}^{1/r}\left(\frac{|A|^{2\alpha r}+|B|^{2\alpha r}}{2}\right)\textbf{ber}^{1/s}\left(\frac{|A^*|^{2(1-\alpha) s}+|B^*|^{2(1-\alpha) s}}{2}\right),
	\end{eqnarray*}
	for all $r,s \geq 1.$ 

In particular, for $s=r$
\begin{eqnarray}\label{eqn1}
	\left\|A+B\right\|^{r}_{ber} \leq 2^{r-1}\textbf{ber}^{1/2}\left(|A|^{2\alpha r}+|B|^{2\alpha r}\right)\textbf{ber}^{1/2}\left(|A^*|^{2(1-\alpha) r}+|B^*|^{2(1-\alpha)r}\right),
	\end{eqnarray}
	for all $r\geq 1.$ 
\end{cor}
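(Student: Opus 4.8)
The plan is to obtain the corollary by a direct specialization of Theorem~\ref{theo1}, taking the six operators $A,B,C,D,X,Y$ of the theorem to be $A=B=C=D=I$, $X=A$, and $Y=B$, where on the right $A,B$ now denote the two operators of the corollary. First I would examine the left-hand side: since $A^*XB=I\cdot A\cdot I=A$ and $C^*YD=I\cdot B\cdot I=B$, the expression $\frac{A^*XB+C^*YD}{2}$ reduces to $\frac{A+B}{2}$, so the left side of Theorem~\ref{theo1} becomes precisely $\left\|\frac{A+B}{2}\right\|_{ber}^{2}$.

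Next I would simplify the four operator expressions inside the Berezin numbers on the right. The identity substitutions give $B^*|X|^{2\alpha}B=|A|^{2\alpha}$, $D^*|Y|^{2\alpha}D=|B|^{2\alpha}$, $A^*|X^*|^{2(1-\alpha)}A=|A^*|^{2(1-\alpha)}$, and $C^*|Y^*|^{2(1-\alpha)}C=|B^*|^{2(1-\alpha)}$. The one observation that matters here is that $|A|$ is positive, so $\left(|A|^{2\alpha}\right)^{r}=|A|^{2\alpha r}$, and likewise for the remaining three terms; this turns the $r$-th and $s$-th powers into the exponents $2\alpha r$ and $2(1-\alpha)s$ appearing in the statement. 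Substituting these into the bound of Theorem~\ref{theo1} yields the first displayed inequality at once, so this part is a routine verification.

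For the special case $s=r$ I would start from the inequality just proved and apply the homogeneity property (i), namely $\textbf{ber}(\beta T)=|\beta|\,\textbf{ber}(T)$, to extract the factor $\frac12$ from each Berezin symbol, so that each factor $\textbf{ber}^{1/r}\!\left(\frac{\,\cdot+\cdot\,}{2}\right)$ equals $2^{-1/r}\textbf{ber}^{1/r}(\cdot+\cdot)$. Together with $\left\|\frac{A+B}{2}\right\|_{ber}^{2}=\frac14\|A+B\|_{ber}^{2}$, this gives $\frac14\|A+B\|_{ber}^{2}\le 2^{-2/r}\,\textbf{ber}^{1/r}\!\left(|A|^{2\alpha r}+|B|^{2\alpha r}\right)\textbf{ber}^{1/r}\!\left(|A^*|^{2(1-\alpha)r}+|B^*|^{2(1-\alpha)r}\right)$. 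The final step is to raise both sides to the power $r/2$ and collect the powers of $2$: the left becomes $\tfrac{1}{2^{r}}\|A+B\|_{ber}^{r}$, the two Berezin numbers acquire the exponent $1/2$, and the constant on the right becomes $\tfrac12$, which after multiplying through by $2^{r}$ produces the stated factor $2^{r-1}$. I expect the only place for error to be this last bookkeeping of the powers of $2$ --- tracking the $\frac14$, the $2^{-2/r}$, and the exponent $r/2$ simultaneously --- since every other step is a direct substitution or a single application of property (i).
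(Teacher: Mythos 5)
Your proposal is correct and is exactly the paper's approach: the paper obtains this corollary by the same substitution $A=B=C=D=I$, $X=A$, $Y=B$ in Theorem~\ref{theo1}, using $\bigl(|A|^{2\alpha}\bigr)^{r}=|A|^{2\alpha r}$ for the positive operator $|A|$. Your power-of-two bookkeeping for the case $s=r$ (the factor $\tfrac14$ on the left, $2^{-2/r}$ on the right, and raising to the power $r/2$) is also accurate and yields the stated constant $2^{r-1}$.
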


\begin{remark}
	It was proved in \cite[Th. 2.17.]{Bm} that if $A,B\in \mathbb{B}(\mathscr{H})$, then
	\begin{eqnarray}\label{eqn2}
	\|A+B\|^{r}_{ber} \leq  2^{r-2}\left(\textbf{ber}(|A|^{2\alpha r}+|B|^{2\alpha r})+\textbf{ber}(|A^*|^{2(1-\alpha) r}+|B^*|^{2(1-\alpha) r})\right),
	\end{eqnarray}
for $0<\alpha<1$ and for all $r \geq 1.$ Clearly, by applying AM-GM inequality, we infer that 
	\begin{align*}
		&\textbf{ber}^{1/2}\left(|A|^{2\alpha r}+|B|^{2\alpha r}\right)\textbf{ber}^{1/2}\left(|A^*|^{2(1-\alpha) r}+|B^*|^{2(1-\alpha)r}\right)\\ &\leq \frac{1}{2}\left(\textbf{ber}(|A|^{2\alpha r}+|B|^{2\alpha r})+\textbf{ber}(|A^*|^{2(1-\alpha) r}+|B^*|^{2(1-\alpha) r})\right). 
	\end{align*}
Thus, the inequality (\ref{eqn1}) is sharper than the inequality (\ref{eqn2}).
\end{remark}

Again, by considering $X=Y=I$ in Theorem \ref{theo1}, we get the following inequality.

\begin{cor}\label{cor3}
	Let $A,B,C,D\in \mathbb{B}(\mathscr{H}).$ Then
	\begin{align}\label{eq1}
	\left\|\frac{A^*B+C^*D}{2}\right\|^{2}_{ber}  \leq \textbf{ber}^{1/r}\left(\frac{|A|^{2r} +|C|^{2r}}{2}\right) \textbf{ber}^{1/s}\left(\frac{|B|^{2s}+|D|^{2s}}{2}\right),
	\end{align}
	for all $r,s \geq 1.$ 
\end{cor}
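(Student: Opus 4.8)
The plan is to obtain Corollary~\ref{cor3} as a direct specialization of Theorem~\ref{theo1}: I would substitute $X=Y=I$ into the general inequality and simplify the resulting operator expressions. Since $I$ is positive and self-adjoint, one has $|I|=|I^*|=I$, and therefore $|I|^{2\alpha}=I$ and $|I^*|^{2(1-\alpha)}=I$ for every $\alpha\in[0,1]$. This means that the parameter $\alpha$ disappears completely from the bound, which is why the corollary is stated without any reference to $\alpha$.

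Next I would carry out the term-by-term reductions. With $X=Y=I$ the left-hand side becomes $\left\|\frac{A^*B+C^*D}{2}\right\|^{2}_{ber}$, exactly as required. On the right-hand side, each sandwiched operator collapses: $B^*|I|^{2\alpha}B=B^*B=|B|^2$, $D^*|I|^{2\alpha}D=|D|^2$, $A^*|I^*|^{2(1-\alpha)}A=|A|^2$, and $C^*|I^*|^{2(1-\alpha)}C=|C|^2$. Raising these positive operators to the relevant powers gives $(B^*B)^r=|B|^{2r}$, $(D^*D)^r=|D|^{2r}$, $(A^*A)^s=|A|^{2s}$, and $(C^*C)^s=|C|^{2s}$, so Theorem~\ref{theo1} yields
\[
\left\|\frac{A^*B+C^*D}{2}\right\|^{2}_{ber}\leq \textbf{ber}^{1/r}\!\left(\frac{|B|^{2r}+|D|^{2r}}{2}\right)\textbf{ber}^{1/s}\!\left(\frac{|A|^{2s}+|C|^{2s}}{2}\right)
\]
for all $r,s\geq 1$.

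Finally, I would reconcile this with the form stated in \eqref{eq1}. The bound just derived carries the pair $(|B|,|D|)$ in the factor with exponent $1/r$ and the pair $(|A|,|C|)$ in the factor with exponent $1/s$, which is the mirror image of the stated inequality. Because the inequality holds for \emph{all} $r,s\geq 1$ and the product on the right-hand side is commutative, I can simply interchange the roles of the dummy parameters $r$ and $s$ (equivalently, apply the derived inequality to the pair $(s,r)$); this produces precisely \eqref{eq1}.

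I do not anticipate any genuine obstacle here, since the argument is purely a substitution followed by algebraic simplification. The only points that require a moment's attention are the observation that the choice $X=Y=I$ renders the parameter $\alpha$ vacuous, and the bookkeeping that the apparent asymmetry between the pairs $(A,C)$ and $(B,D)$ is harmless once one exploits the symmetric quantification over $r$ and $s$.
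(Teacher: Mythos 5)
Your proposal is correct and takes essentially the same route as the paper, which obtains the corollary precisely by setting $X=Y=I$ in Theorem \ref{theo1}. Your explicit relabeling of the dummy parameters $r$ and $s$ to reconcile the order of the two factors is a bookkeeping detail the paper leaves implicit, but it is the right way to close that small gap.
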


\begin{remark}
It was  proved in \cite[Theorem 3.5.]{ceb} that if $A,B,C,D\in \mathbb{B}(\mathscr{H}),$ then
	\begin{align}\label{Ceb}
	\textbf{ber}^2\left(\frac{A^*B+C^*D}{2}\right)  \leq \textbf{ber}^{1/r}\left(\frac{|A|^{2r} +|C|^{2r}}{2}\right) \textbf{ber}^{1/s}\left(\frac{|B|^{2s}+|D|^{2s}}{2}\right),
	\end{align}
for all $r,s \geq 1.$

Since $\textbf{ber} (A^*B+C^*D) \leq \| A^*B+C^*D \|_{ber}$, the inequality \eqref{Ceb} follows from (\ref{eq1}).
\end{remark}

Next inequality follows from Corollary \ref{cor3} by taking $s=r$.

\begin{cor}\label{cor4}
	Let $A,B,C,D\in \mathbb{B}(\mathscr{H}).$  Then 
	\begin{align*}
	\left\|\frac{A^*B+C^*D}{2}\right\|^{2r}_{ber}  \leq \textbf{ber}\left(\frac{|A|^{2r} +|C|^{2r}}{2}\right) \textbf{ber}\left(\frac{|B|^{2r} +|D|^{2r}}{2}\right),
	\end{align*}
for all $r \geq 1.$
\end{cor}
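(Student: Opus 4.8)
The plan is to obtain this as an immediate specialization of Corollary \ref{cor3}. Since inequality \eqref{eq1} is valid for all $r,s\geq 1$ simultaneously, I would first set $s=r$ there. This substitution is permissible for every fixed $r\geq 1$, and it collapses the two parameters into one, yielding
$$\left\|\frac{A^*B+C^*D}{2}\right\|^{2}_{ber}\leq \textbf{ber}^{1/r}\left(\frac{|A|^{2r}+|C|^{2r}}{2}\right)\,\textbf{ber}^{1/r}\left(\frac{|B|^{2r}+|D|^{2r}}{2}\right).$$

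Next I would raise both sides of this inequality to the $r$-th power. Both sides are nonnegative real numbers — the Berezin norm and the Berezin number are by definition suprema of moduli and hence lie in $[0,\infty)$ — and for $r\geq 1$ the map $t\mapsto t^r$ is monotonically nondecreasing on $[0,\infty)$. Consequently the inequality is preserved, while on the left the exponent becomes $2r$ and on the right the two factors shed their $1/r$ powers. This reproduces precisely the claimed bound
$$\left\|\frac{A^*B+C^*D}{2}\right\|^{2r}_{ber}\leq \textbf{ber}\left(\frac{|A|^{2r}+|C|^{2r}}{2}\right)\,\textbf{ber}\left(\frac{|B|^{2r}+|D|^{2r}}{2}\right).$$

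There is essentially no obstacle to surmount here: the entire content of the statement already resides in Corollary \ref{cor3}, and the passage from the general two-parameter form to this single-parameter form is purely algebraic. The only point that merits an explicit remark is the nonnegativity of both sides, which is what licenses applying the monotone map $t\mapsto t^r$; this is automatic from the definitions of $\|\cdot\|_{ber}$ and $\textbf{ber}(\cdot)$ as suprema of absolute values.
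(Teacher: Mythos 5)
Your proposal is correct and matches the paper's own derivation: the paper obtains Corollary \ref{cor4} precisely by setting $s=r$ in Corollary \ref{cor3} and raising both sides to the $r$-th power, which is legitimate since both sides are nonnegative. Your explicit remark on the monotonicity of $t\mapsto t^r$ on $[0,\infty)$ just spells out what the paper leaves implicit.
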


Now, we prove an interesting equality for positive operators.

\begin{proposition}\label{prop1}
	If $A\in \mathbb{B}(\mathscr{H})$ is positive (i.e., $A\geq 0$), then
	\[\left\|A\right\|_{ber}=  \textbf{ber}\left(A\right).\]
\end{proposition}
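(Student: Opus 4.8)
The plan is to split the claim into the two inequalities $\textbf{ber}(A) \leq \|A\|_{ber}$ and $\|A\|_{ber} \leq \textbf{ber}(A)$. The first holds for every operator and is already recorded as property (iii) above, so the entire content of the proposition lies in establishing the reverse inequality $\|A\|_{ber} \leq \textbf{ber}(A)$ under the hypothesis $A \geq 0$. Everything reduces to a pointwise estimate of the off-diagonal quantity $|\langle A\hat{k}_{\lambda},\hat{k}_{\mu}\rangle|$ followed by a supremum.

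First I would fix two normalized reproducing kernels $\hat{k}_{\lambda}$ and $\hat{k}_{\mu}$ and exploit positivity through Lemma \ref{lemma2}. Since $A$ is positive it is selfadjoint, so $|A| = |A^*| = A$; applying Lemma \ref{lemma2} with $\alpha = \tfrac12$, $x = \hat{k}_{\lambda}$ and $y = \hat{k}_{\mu}$ collapses the bound to the symmetric diagonal estimate
\[
|\langle A\hat{k}_{\lambda},\hat{k}_{\mu}\rangle|^2 \leq \langle A\hat{k}_{\lambda},\hat{k}_{\lambda}\rangle\,\langle A\hat{k}_{\mu},\hat{k}_{\mu}\rangle = \widetilde{A}(\lambda)\,\widetilde{A}(\mu).
\]
(Equivalently, one may factor $A = A^{1/2}A^{1/2}$, write $\langle A\hat{k}_{\lambda},\hat{k}_{\mu}\rangle = \langle A^{1/2}\hat{k}_{\lambda}, A^{1/2}\hat{k}_{\mu}\rangle$, and apply the ordinary Cauchy--Schwarz inequality, noting that $\|A^{1/2}\hat{k}_{\lambda}\|^2 = \widetilde{A}(\lambda)$.) Bounding each diagonal symbol value by the Berezin number, namely $\widetilde{A}(\lambda) = \langle A\hat{k}_{\lambda},\hat{k}_{\lambda}\rangle \leq \textbf{ber}(A)$ and likewise for $\mu$, yields $|\langle A\hat{k}_{\lambda},\hat{k}_{\mu}\rangle|^2 \leq \textbf{ber}(A)^2$, hence $|\langle A\hat{k}_{\lambda},\hat{k}_{\mu}\rangle| \leq \textbf{ber}(A)$. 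Taking the supremum over all $\lambda,\mu \in \Omega$ gives $\|A\|_{ber} \leq \textbf{ber}(A)$, and combining with property (iii) finishes the argument.

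There is no serious obstacle here; the only subtlety worth flagging is that positivity is used in two places, and both are essential. It enters first to identify $|A| = |A^*| = A$ so that Lemma \ref{lemma2} degenerates into the desired product of diagonal symbols (equivalently, to make the factorization $A = A^{1/2}A^{1/2}$ available), and it enters a second time to guarantee that the symbols $\widetilde{A}(\lambda)$ are nonnegative reals, so that bounding them directly by $\textbf{ber}(A)$ loses nothing. For a merely selfadjoint operator the factorization step is unavailable, and consistent with the abstract the equality genuinely fails.
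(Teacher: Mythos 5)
Your proof is correct, and it differs from the paper's in route though not in underlying mechanism. The paper obtains the nontrivial inequality $\|A\|_{ber}\leq \textbf{ber}(A)$ by specializing its Corollary \ref{cor4} with $A=B=X$, $C=D=0$ and $r=1$, which gives $\|X^*X\|_{ber}\leq \textbf{ber}(X^*X)$ for every $X\in \mathbb{B}(\mathscr{H})$, hence $\|X^*X\|_{ber}=\textbf{ber}(X^*X)$ by property (iii), and then represents the positive operator as $A=Y^*Y$. Your argument is the direct, self-contained version of the same idea: applying Lemma \ref{lemma2} with $\alpha=\tfrac12$ (equivalently, factoring $A=A^{1/2}A^{1/2}$ and using Cauchy--Schwarz) is exactly what the chain Theorem \ref{theo1} $\Rightarrow$ Corollary \ref{cor3} $\Rightarrow$ Corollary \ref{cor4} collapses to under the paper's substitutions, and positivity enters both proofs in the same way, through writing $A$ as a square. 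What your version buys is independence from the general machinery --- it can be read without Theorem \ref{theo1} and makes the role of positivity transparent; what the paper's version buys is economy, since the proposition becomes a one-line consequence of a result already established. One small correction to your closing commentary: the ``second essential use of positivity'' is not actually essential, since the final step only requires $|\widetilde{A}(\lambda)|\,|\widetilde{A}(\mu)|\leq \textbf{ber}^{2}(A)$, which holds for arbitrary operators; positivity is genuinely needed only to make the factorization (equivalently, the collapse of Lemma \ref{lemma2} to a symmetric diagonal bound) available.
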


\begin{proof}
Putting $A=B=X$, $C=D=0$ and $r=1$ in Corollary \ref{cor4}, we have
\begin{align*}
	\left\|X^*X\right\|_{ber}\leq \textbf{ber}\left(X^*X\right) \,\, \text{for every $X\in \mathbb{B}(\mathscr{H})$}.
\end{align*}
Therefore, for every $X\in \mathbb{B}(\mathscr{H})$
\begin{align}\label{eql2}
	\left\|X^*X\right\|_{ber}= \textbf{ber}\left(X^*X\right).
\end{align}
Since $A$ is positive, there exists a $Y\in \mathbb{B}(\mathscr{H})$ such that $A=Y^*Y$. This argument together with \eqref{eql2} gives that $	\left\|A\right\|_{ber}=  \textbf{ber}\left(A\right).$
\end{proof}

In the following example we show that the above proposition may not be true for  selfadjoint operators.

\begin{example}
	Considering $\mathbb{C}^{2n}$ as a RKHS (see in \cite[pp. 4-5]{book}), if we take 
	$$A=   \begin{pmatrix}
0&0&\ldots&0&1\\
0&0&\ldots&1&0\\
\vdots\\
0&1&\ldots&0&0\\
1&0&\ldots&0&0
\end{pmatrix}_{2n \times 2n},$$ then $\|A\|_{ber}=1\neq 0= \textbf{ber}(A)$. 
	It is easy to verify that  $A$ is a selfadjoint operator on $\mathbb{C}^{2n}$ and so  $w(A)=\|A\|$ The difference between this two relations is due to the fact that the collection of normalized reproducing kernel for $\mathbb{C}^{2n}$ is precisely the $2n$ elements $e_1,e_2,\ldots, e_{2n}$ ($e_i(j)=\delta_{ij}$, $\delta_{ij}$ is kronecker delta function), whereas the unit sphere of $\mathbb{C}^{2n}$ consist of uncountably many elements. This justifies the study of Berezin norm and Berezin number inequalities independently.


\end{example}

Next inequality follows by taking $C=B$ and $D=A$ in Corollary \ref{cor3}.

\begin{cor}\label{cor5}
	Let $A,B\in \mathbb{B}(\mathscr{H}).$ Then
	\begin{align*}
	&(i)~~\left\|\frac{A^*B+B^*A}{2}\right\|^{2}_{ber}  \leq \textbf{ber}^{1/r}\left(\frac{|A|^{2r} +|B|^{2r}}{2}\right) \textbf{ber}^{1/s}\left(\frac{|A|^{2s}+|B|^{2s}}{2}\right),\\&~~\mbox{for all $r,s \geq 1$}.\\
	&(ii)~~\left\|\frac{A^*B+B^*A}{2}\right\|^{r}_{ber}  \leq \textbf{ber}\left(\frac{|A|^{2r} +|B|^{2r}}{2}\right),\\&~~\mbox{for all $r \geq 1.$}
	\end{align*} 

	In particular, for $r=1$
	\begin{eqnarray}
		\|A^*B+B^*A\|_{ber} \leq \textbf{ber}(A^*A+B^*B) = \|A^*A+B^*B\|_{ber}.
	\end{eqnarray} 

\end{cor}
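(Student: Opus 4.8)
The plan is to obtain the entire statement as a specialization of Corollary \ref{cor3}, so no fresh estimation is required. First I would substitute $C=B$ and $D=A$ into the inequality of Corollary \ref{cor3}. The numerator $A^*B+C^*D$ then becomes $A^*B+B^*A$, while on the right-hand side the term $|C|^{2r}$ turns into $|B|^{2r}$ and $|D|^{2s}$ turns into $|A|^{2s}$; since these substitutions are admissible for arbitrary $A,B\in\mathbb{B}(\mathscr{H})$, inequality $(i)$ follows immediately.

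For part $(ii)$ I would set $s=r$ in the inequality just obtained. The two Berezin-number factors on the right then coincide, each equal to $\textbf{ber}\!\left(\frac{|A|^{2r}+|B|^{2r}}{2}\right)$, so their product collapses to $\textbf{ber}^{2/r}\!\left(\frac{|A|^{2r}+|B|^{2r}}{2}\right)$. Raising both (nonnegative) sides to the power $r/2$ yields $(ii)$ at once.

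Finally, for the particular case $r=1$, I would insert $r=1$ into $(ii)$ and use the identities $|A|^2=A^*A$ and $|B|^2=B^*B$. Invoking the scalar-homogeneity of the Berezin norm and of the Berezin number (properties $(iv)$ and $(i)$ recorded in the introduction) to clear the factors of $2$ then gives $\|A^*B+B^*A\|_{ber}\leq \textbf{ber}(A^*A+B^*B)$. The concluding equality $\textbf{ber}(A^*A+B^*B)=\|A^*A+B^*B\|_{ber}$ is exactly Proposition \ref{prop1}, applicable because $A^*A+B^*B$ is positive, being a sum of two positive operators. There is essentially no genuine obstacle here: the only points demanding mild care are the homogeneity bookkeeping needed to pass from the normalized expressions $\frac{A^*B+B^*A}{2}$ and $\frac{A^*A+B^*B}{2}$ to $A^*B+B^*A$ and $A^*A+B^*B$, and the recognition that Proposition \ref{prop1} applies precisely because positivity holds for $A^*A+B^*B$.
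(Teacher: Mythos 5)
Your proposal is correct and follows exactly the paper's route: the paper obtains Corollary \ref{cor5} precisely by substituting $C=B$, $D=A$ into Corollary \ref{cor3}, with part $(ii)$ the case $s=r$ and the $r=1$ case combined with Proposition \ref{prop1} for the final equality. Your write-up merely makes explicit the homogeneity bookkeeping and the appeal to Proposition \ref{prop1} that the paper leaves implicit, so there is nothing to correct.
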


Now, taking $A=C=I, B=A$ and $D=B$ in Corollary \ref{cor3} we get the following corollary.

\begin{cor}\label{cor11}
	Let $A,B\in \mathbb{B}(\mathscr{H}).$ Then
	\begin{align}\label{eqn21}
	\left\|\frac{A+B}{2}\right\|^{2r}_{ber} \leq \textbf{ber}\left(\frac{|A|^{2r} +|B|^{2r}}{2}\right),
	\end{align}
	 for all $r \geq 1.$
	 
	 In particular, for $r=1$ 
	 \begin{align}
	    \left\|A+B\right\|^{2}_{ber} \leq 2\,\textbf{ber}(A^*A+B^*B) = 2\|A^*A+B^*B\|_{ber}
	 \end{align}
 
\end{cor}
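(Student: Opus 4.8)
The plan is to derive Corollary \ref{cor11} directly as a specialization of Corollary \ref{cor3}, exactly as the setup sentence indicates. The target inequality \eqref{eqn21} is the $s=r$ form, so I would first obtain the general $(r,s)$ version and then collapse it. Setting $A=C=I$, $B=A$, and $D=B$ in the inequality \eqref{eq1} of Corollary \ref{cor3}, the left side $\left\|\tfrac{A^*B+C^*D}{2}\right\|_{ber}^2$ becomes $\left\|\tfrac{A+B}{2}\right\|_{ber}^2$, since $A^*=I^*=I$ and $C^*=I$. On the right side, $|A|^{2r}$ and $|C|^{2r}$ become $|I|^{2r}=I$, giving the first factor $\textbf{ber}^{1/r}\!\left(\tfrac{I+I}{2}\right)=\textbf{ber}^{1/r}(I)=1$; and the substituted operators in the second factor yield $\textbf{ber}^{1/s}\!\left(\tfrac{|A|^{2s}+|B|^{2s}}{2}\right)$. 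This produces $\left\|\tfrac{A+B}{2}\right\|_{ber}^2 \leq \textbf{ber}^{1/s}\!\left(\tfrac{|A|^{2s}+|B|^{2s}}{2}\right)$ for all $s\geq 1$.

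Next I would set $s=r$ and raise both sides to the power $r$. Raising the left to the $r$-th power gives $\left\|\tfrac{A+B}{2}\right\|_{ber}^{2r}$, and raising the right gives $\textbf{ber}\!\left(\tfrac{|A|^{2r}+|B|^{2r}}{2}\right)$, which is precisely \eqref{eqn21}. I would note that $\textbf{ber}(I)=\sup_\lambda|\langle I\hat{k}_\lambda,\hat{k}_\lambda\rangle|=\sup_\lambda\|\hat{k}_\lambda\|^2=1$ since the reproducing kernels are normalized; this is the only small verification the argument requires.

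For the particular case $r=1$, I would start from \eqref{eqn21} with $r=1$, multiply through by the factor coming from the $2^{-2}$ inside the squared norm, and use the homogeneity property $(iv)$ together with the triangle inequality only where needed. Explicitly, $\left\|\tfrac{A+B}{2}\right\|_{ber}^2=\tfrac{1}{4}\|A+B\|_{ber}^2$ by property $(iv)$, and the right side with $r=1$ is $\textbf{ber}\!\left(\tfrac{|A|^2+|B|^2}{2}\right)=\tfrac{1}{2}\textbf{ber}(A^*A+B^*B)$ by property $(i)$ applied to the scalar $\tfrac{1}{2}$. Rearranging gives $\|A+B\|_{ber}^2\leq 2\,\textbf{ber}(A^*A+B^*B)$. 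Finally, since $A^*A+B^*B$ is positive, Proposition \ref{prop1} converts the Berezin number to the Berezin norm, yielding $2\,\textbf{ber}(A^*A+B^*B)=2\|A^*A+B^*B\|_{ber}$.

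Since the entire statement is a substitution plus a power-raising step into an already-proved corollary, there is no genuine analytic obstacle. The only points demanding care are bookkeeping: correctly tracking the exponents $1/r$ and $1/s$ through the substitution, confirming $\textbf{ber}(I)=1$, and invoking Proposition \ref{prop1} in the last line to replace $\textbf{ber}$ by $\|\cdot\|_{ber}$ on the positive operator $A^*A+B^*B$. The main thing to watch is the positivity hypothesis required by Proposition \ref{prop1}, which holds automatically here because $A^*A+B^*B$ is a sum of positive operators.
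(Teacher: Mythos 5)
Your proposal is correct and is exactly the paper's route: the paper derives Corollary \ref{cor11} by the same substitution $A=C=I$, $B=A$, $D=B$ into Corollary \ref{cor3}, with the first factor collapsing since $\textbf{ber}(I)=1$, followed by setting $s=r$, raising to the $r$-th power, and invoking Proposition \ref{prop1} for the positive operator $A^*A+B^*B$ in the $r=1$ case. Your bookkeeping of the exponents and the positivity check are all consistent with the paper.
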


	If we take $A=\Re(A)$ and $B=\rm{i}\Im(A)$ in (\ref{eqn21}), then we get 
	  \begin{align}
	 \|A\|^{2r}_{ber} \leq 2^{2r-1}\textbf{ber}\left(\Re(A)^{2r}+\Im(A)^{2r}\right),
	  \end{align}
	  for all $r \geq 1.$
	  
	  Also, if we take $A=A$ and $B=A^*$ in (\ref{eqn21}), then we get
	   \begin{align}
	  \|\Re(A)\|^{2r}_{ber} \leq \textbf{ber}\left(\frac{|A|^{2r}+|A^*|^{2r}}{2}\right) \,\, \text{for all $r \geq 1$}
	  \end{align}
	 and if we take $A=A$ and $B=-A^*$ in (\ref{eqn21}), then we get
	   \begin{align}
	   \|\Im(A)\|^{2r}_{ber} \leq \textbf{ber}\left(\frac{|A|^{2r}+|A^*|^{2r}}{2}\right) \,\,\text{for all $r \geq 1.$}
	   \end{align}

The following result follows from Corollary \ref{cor3} by considering $A=A^*, B=A, C=B^*$ and $D=B.$

\begin{cor}\label{cor6}
		Let $A,B\in \mathbb{B}(\mathscr{H}).$ Then
	\begin{align*}
	  	&(i)~~\left\|\frac{A^2+B^2}{2}\right\|^{2}_{ber} \leq \textbf{ber}^{1/r}\left(\frac{|A|^{2r} +|B|^{2r}}{2}\right) \textbf{ber}^{1/s}\left(\frac{|A^*|^{2s} +|B^*|^{2s}}{2}\right),\\& ~~\mbox{for all $r,s \geq 1.$}\\
	  	&(ii)~~\left\|\frac{A^2+B^2}{2}\right\|^{2r}_{ber} \leq \textbf{ber}\left(\frac{|A|^{2r} +|B|^{2r}}{2}\right) \textbf{ber}\left(\frac{|A^*|^{2r} +|B^*|^{2r}}{2}\right),\\& ~~\mbox{for all $r \geq 1.$}
	\end{align*}

	In particular, for $r=1$ 
	\begin{align}
	\left\|A^2+B^2\right\|^{2}_{ber} & \leq \textbf{ber}(A^*A+B^*B)\textbf{ber}(AA^*+BB^*) \\
	&= \|A^*A+B^*B\|_{ber}\|AA^*+BB^*\|_{ber} \nonumber.
	\end{align}
\end{cor}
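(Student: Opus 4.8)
The plan is to obtain Corollary \ref{cor6} as a direct specialization of Corollary \ref{cor3}, exactly as the surrounding text indicates, followed by the two routine reductions (setting $s=r$, and $r=1$). To keep the notation clean I would denote the four operator variables of Corollary \ref{cor3} by $A_0,B_0,C_0,D_0$, so as not to clash with the symbols $A,B$ of the present statement, and make the substitution
\[
A_0 = A^*, \qquad B_0 = A, \qquad C_0 = B^*, \qquad D_0 = B.
\]

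First I would check the left-hand side: with this choice $A_0^* B_0 = (A^*)^* A = A^2$ and $C_0^* D_0 = (B^*)^* B = B^2$, so $\frac{A_0^* B_0 + C_0^* D_0}{2} = \frac{A^2+B^2}{2}$, matching the term inside the Berezin norm. Next I would rewrite the two Berezin-number factors on the right. Since $|A_0|^2 = A_0^* A_0 = A A^* = |A^*|^2$ and likewise $|C_0|^2 = |B^*|^2$, the first factor $\textbf{ber}^{1/r}\!\left(\frac{|A_0|^{2r}+|C_0|^{2r}}{2}\right)$ becomes $\textbf{ber}^{1/r}\!\left(\frac{|A^*|^{2r}+|B^*|^{2r}}{2}\right)$; and since $|B_0|^2 = A^* A = |A|^2$ and $|D_0|^2 = |B|^2$, the second factor becomes $\textbf{ber}^{1/s}\!\left(\frac{|A|^{2s}+|B|^{2s}}{2}\right)$. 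This yields the claimed inequality but with the roles of $|A|,|B|$ and $|A^*|,|B^*|$ interchanged between the two factors. Because Corollary \ref{cor3} is quantified over \emph{all} pairs $r,s\geq 1$ independently, I would then relabel $r\leftrightarrow s$ and reorder the product to arrive at part (i) precisely in the stated form.

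Part (ii) I would get by putting $s=r$ in part (i), which fuses the two exponents into a common $1/r$, and then raising both sides to the power $r$. For the $r=1$ display I would invoke the positive-homogeneity of $\|\cdot\|_{ber}$ and $\textbf{ber}(\cdot)$ (properties (iv) and (i)) to absorb the factors of $\tfrac12$, together with the identities $|A|^2=A^*A$, $|A^*|^2=AA^*$ and their $B$-analogues; the concluding equality with the Berezin norms then follows by applying Proposition \ref{prop1} to the positive operators $A^*A+B^*B$ and $AA^*+BB^*$, for which Berezin number and Berezin norm coincide.

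The whole derivation is mechanical, so I do not anticipate a genuine obstacle. The one point requiring care is keeping the conjugate moduli straight: the substitution $A_0=A^*$ produces $|A_0|^2 = AA^* = |A^*|^2$ and \emph{not} $A^*A$, and symmetrically $B_0=A$ produces $|B_0|^2=A^*A=|A|^2$. I would also want to record explicitly that the $r\leftrightarrow s$ relabeling is legitimate exactly because the parent inequality holds for every $r,s\geq 1$, rather than for a single fixed pair.
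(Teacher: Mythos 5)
Your proposal is correct and follows exactly the paper's route: the paper obtains Corollary \ref{cor6} by the same substitution $A\mapsto A^*$, $B\mapsto A$, $C\mapsto B^*$, $D\mapsto B$ in Corollary \ref{cor3}, with parts (ii) and the $r=1$ display handled by the same routine specializations and Proposition \ref{prop1}. Your explicit observation that the direct substitution attaches $r$ to the starred moduli and $s$ to the unstarred ones, so that an $r\leftrightarrow s$ relabeling (legitimate since the parent inequality holds for all $r,s\geq 1$) is needed to match the stated form, is a detail the paper silently glosses over, and you resolve it correctly.
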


Also, the following inequality follows from Corollary \ref{cor3} by choosing $A=I, D=I, C=B^*$ and $B=A$.

\begin{cor}\label{cor7}
	Let $A,B\in \mathbb{B}(\mathscr{H}).$ Then 
	\begin{align}\label{eqn3}
	     \left\|\frac{A+B}{2}\right\|^{2}_{ber} \leq \textbf{ber}^{1/r}\left(\frac{|A|^{2r} +I}{2}\right) \textbf{ber}^{1/s}\left(\frac{|B^*|^{2s} +I}{2}\right),
	\end{align}
 for all $r,s \geq 1.$
 
	In particular, for $B=A$ 
	\begin{align}\label{eqn4}
	\|A\|^{2}_{ber} \leq \textbf{ber}^{1/r}\left(\frac{|A|^{2r} +I}{2}\right) \textbf{ber}^{1/s}\left(\frac{|A^*|^{2s} +I}{2}\right),
	\end{align}
	for all $r,s \geq 1.$
	
	 Moreover, for $s=r$
	\begin{align}\label{eqn5}
	\|A\|^{2r}_{ber} \leq \textbf{ber}\left(\frac{|A|^{2r} +I}{2}\right)\textbf{ber}\left(\frac{|A^*|^{2r} +I}{2}\right),
	\end{align}
 for all $r \geq 1.$
\end{cor}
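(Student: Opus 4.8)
The plan is to derive all three displayed inequalities as direct specializations of Corollary \ref{cor3}, so no fresh estimation is required — everything reduces to choosing the right operators in \eqref{eq1} and simplifying. To get \eqref{eqn3}, I would substitute into \eqref{eq1} the choices $A=A$, $B=I$, $C=I$, $D=B^*$ (reading $A,B,C,D$ as the four operators of Corollary \ref{cor3}). With these choices the argument of the Berezin norm on the left collapses to $\frac{A^*I+I^*B^*}{2}=\frac{A^*+B^*}{2}=\frac{(A+B)^*}{2}$, while the two right-hand factors become $\textbf{ber}^{1/r}\!\left(\frac{|A|^{2r}+|I|^{2r}}{2}\right)$ and $\textbf{ber}^{1/s}\!\left(\frac{|I|^{2s}+|B^*|^{2s}}{2}\right)$. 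I would then invoke $|I|=I$ together with the identity $\|X^*\|_{ber}=\|X\|_{ber}$ recorded in property $(vi)$, which rewrites $\left\|\frac{(A+B)^*}{2}\right\|_{ber}$ as $\left\|\frac{A+B}{2}\right\|_{ber}$; this yields exactly \eqref{eqn3}. (Equivalently, one may use the substitution $A=I$, $D=I$, $C=B^*$, $B=A$ stated above, which sends the left side directly to $\frac{A+B}{2}$ but produces the two factors with $r$ and $s$ interchanged; since \eqref{eq1} holds for all $r,s\geq 1$, relabelling $r\leftrightarrow s$ recovers the asserted form.)

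Having \eqref{eqn3}, the two ``in particular'' statements follow by further specialization. For \eqref{eqn4} I would set $B=A$: the left side becomes $\left\|\frac{A+A}{2}\right\|^2_{ber}=\|A\|^2_{ber}$, and $|B^*|$ is replaced by $|A^*|$, giving the claimed bound. For \eqref{eqn5} I would then put $s=r$ in \eqref{eqn4}, turning the right side into $\textbf{ber}^{1/r}\!\left(\frac{|A|^{2r}+I}{2}\right)\textbf{ber}^{1/r}\!\left(\frac{|A^*|^{2r}+I}{2}\right)$, and raise both sides to the power $r$; this converts $\|A\|^2_{ber}$ into $\|A\|^{2r}_{ber}$ and removes the exponents $1/r$ from the two Berezin-number factors, which is precisely \eqref{eqn5}.

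I do not anticipate any genuine difficulty, since each step is a substitution into an already-established inequality followed by elementary algebraic simplification. The only point deserving a moment's care is that the natural substitution produces $\frac{(A+B)^*}{2}$ rather than $\frac{A+B}{2}$ on the left; this is handled cleanly by the Berezin-norm invariance $\|X\|_{ber}=\|X^*\|_{ber}$, or, symmetrically, by the freedom to interchange the dummy indices $r$ and $s$. One should also note that all substituted operators ($I$, $A$, $B^*$, etc.) plainly lie in $\mathbb{B}(\mathscr{H})$, so Corollary \ref{cor3} applies without restriction.
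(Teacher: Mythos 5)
Your proposal is correct and takes essentially the same route as the paper: the paper derives \eqref{eqn3} by substituting $A=I$, $B=A$, $C=B^*$, $D=I$ into Corollary \ref{cor3} (which, as you observe, yields the two factors with $r$ and $s$ interchanged and so implicitly uses the relabelling $r\leftrightarrow s$), and then obtains \eqref{eqn4} and \eqref{eqn5} by the same specializations $B=A$ and $s=r$ that you use. Your primary substitution, which instead produces $\frac{(A+B)^*}{2}$ and invokes $\|X\|_{ber}=\|X^*\|_{ber}$, is only a cosmetic variant of this.
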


Now, taking $A=A^*$ and $C=D=0$ in Corollary \ref{cor3} we get the following result.

\begin{cor}
		Let $A,B\in \mathbb{B}(\mathscr{H}).$ Then
		\begin{align*}
		      	&(i)~~\left\|AB \right\|^{2}_{ber} \leq 2^{2-1/r-1/s}\textbf{ber}^{1/r}\left(|A^*|^{2r}\right) \textbf{ber}^{1/s}\left(|B|^{2s}\right)~~\mbox{for all $r,s \geq 1.$}\\
		      	&(ii)~~\left\|AB \right\|^{2r}_{ber} \leq 2^{2r-2}\textbf{ber}\left(|A^*|^{2r}\right) \textbf{ber}\left(|B|^{2r}\right)~~\mbox{for all $r \geq 1.$}
		\end{align*}
		In particular, for $r=1$
		\begin{eqnarray}
			\left\|AB \right\|_{ber} \leq {\textbf{ber}^{1/2}\left(AA^*\right) \textbf{ber}^{1/2}\left(B^*B\right)}=\left\|AA^* \right\|^{1/2}_{ber}\left\|B^*B \right\|^{1/2}_{ber}.
		\end{eqnarray}
\end{cor}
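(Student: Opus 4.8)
The plan is to derive this corollary directly from Corollary \ref{cor3} by making the substitution indicated: set $A = A^*$, $B = B$, $C = 0$, and $D = 0$. First I would substitute these into the general inequality \eqref{eq1} of Corollary \ref{cor3}, which reads
\[
\left\|\frac{A^*B+C^*D}{2}\right\|^{2}_{ber} \leq \textbf{ber}^{1/r}\left(\frac{|A|^{2r}+|C|^{2r}}{2}\right)\textbf{ber}^{1/s}\left(\frac{|B|^{2s}+|D|^{2s}}{2}\right).
\]
With $C = D = 0$, the left-hand side becomes $\left\|\frac{(A^*)^*B}{2}\right\|^{2}_{ber} = \left\|\frac{AB}{2}\right\|^{2}_{ber} = \frac{1}{4}\|AB\|^2_{ber}$, using the homogeneity property (iv) of the Berezin norm. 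On the right-hand side, the term $|C|^{2r}$ vanishes since $C = 0$, leaving $\frac{|A^*|^{2r}}{2}$ inside the first Berezin number (here the operator in the first slot is $A^*$, so $|(A^*)| = |A^*|$), and similarly the second factor reduces to $\frac{|B|^{2s}}{2}$.

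Next I would clean up the constants using the positive-homogeneity of $\textbf{ber}$, namely property (i) applied to positive scalar multiples. Pulling out the factors of $\frac{1}{2}$ gives $\textbf{ber}^{1/r}\!\big(\frac{1}{2}|A^*|^{2r}\big) = \big(\frac{1}{2}\big)^{1/r}\textbf{ber}^{1/r}(|A^*|^{2r})$ and likewise $\big(\frac{1}{2}\big)^{1/s}$ from the second factor. Multiplying through by $4$ to clear the $\frac{1}{4}$ on the left then collects the scalar prefactor as $4 \cdot 2^{-1/r} \cdot 2^{-1/s} = 2^{2-1/r-1/s}$, which yields part (i). For part (ii) I would simply specialize $s = r$, so the prefactor becomes $2^{2-2/r}$; raising both sides to the power $r$ (equivalently rewriting $\|AB\|^{2}_{ber}$ raised to the $r$) gives the exponent bookkeeping $2^{(2-2/r)r} = 2^{2r-2}$ and the stated form $\|AB\|^{2r}_{ber} \leq 2^{2r-2}\,\textbf{ber}(|A^*|^{2r})\,\textbf{ber}(|B|^{2r})$.

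For the final displayed special case, I would set $r = 1$ in part (i), giving the prefactor $2^{2-1-1} = 2^0 = 1$ and the bound $\|AB\|^2_{ber} \le \textbf{ber}(|A^*|^2)\,\textbf{ber}(|B|^2) = \textbf{ber}(AA^*)\,\textbf{ber}(B^*B)$, using $|A^*|^2 = AA^*$ and $|B|^2 = B^*B$. Taking square roots gives $\|AB\|_{ber} \le \textbf{ber}^{1/2}(AA^*)\,\textbf{ber}^{1/2}(B^*B)$. The final equality in that display then follows from Proposition \ref{prop1}: since $AA^*$ and $B^*B$ are positive operators, $\textbf{ber}(AA^*) = \|AA^*\|_{ber}$ and $\textbf{ber}(B^*B) = \|B^*B\|_{ber}$, whence the right-hand side equals $\|AA^*\|^{1/2}_{ber}\|B^*B\|^{1/2}_{ber}$.

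This is essentially a routine specialization, so there is no serious mathematical obstacle; the only thing demanding care is the scalar bookkeeping, namely correctly tracking how the factors of $\frac{1}{2}$ inside the Berezin numbers combine with the $\frac{1}{4}$ from the squared Berezin norm under the homogeneity relations. I would double-check the exponent arithmetic $4 \cdot 2^{-1/r-1/s} = 2^{2-1/r-1/s}$ and, for part (ii), that raising to the power $r$ does not disturb the direction of the inequality (it does not, since both sides are nonnegative and $t \mapsto t^r$ is increasing for $r \ge 1$).
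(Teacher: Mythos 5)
Your proposal is correct and is exactly the paper's approach: the paper obtains this corollary by substituting $A=A^*$ and $C=D=0$ into Corollary \ref{cor3}, which is precisely your substitution, and your scalar bookkeeping (the $2^{2-1/r-1/s}$ prefactor, the $s=r$ specialization raised to the $r$-th power, and the final appeal to Proposition \ref{prop1} for the equality when $r=1$) is accurate.
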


Also, the next result follows from Corollary \ref{cor3} by taking $A=A^*,B=B,C=\pm B^*$ and $D=A.$

\begin{cor}\label{cor8}
		Let $A,B\in \mathbb{B}(\mathscr{H}).$ Then  
		\begin{align*}
		&(i)~~\left\|\frac{AB \pm BA}{2}\right\|^{2}_{ber} \leq \textbf{ber}^{1/r}\left(\frac{|A|^{2r} +|B|^{2r}}{2}\right) \textbf{ber}^{1/s}\left(\frac{|A^*|^{2s}+|B^*|^{2s}}{2}\right),\\&~~\mbox{for all $r,s \geq 1.$}\\
		&(ii)~~\left\|\frac{AB \pm BA}{2}\right\|^{2r}_{ber} \leq \textbf{ber}\left(\frac{|A|^{2r} +|B|^{2r}}{2}\right) \textbf{ber}\left(\frac{|A^*|^{2r}+|B^*|^{2r}}{2}\right),\\&~~\mbox{for all $r\geq 1.$}
		\end{align*}
	\end{cor}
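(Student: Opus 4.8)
The plan is to obtain both parts directly from Corollary \ref{cor3}, which already carries all of the analytic content (the mixed Cauchy--Schwarz estimate of Lemma \ref{lemma2}, the power-mean monotonicity of Lemma \ref{lemma3}, and Lemma \ref{lemma1}). No new estimate is required; the argument is a single substitution followed by elementary bookkeeping.

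For part (i), I would apply Corollary \ref{cor3} with its four operators chosen as $A^*$, $B$, $\pm B^*$ and $A$ (playing the roles of its $A$, $B$, $C$, $D$, respectively). Then the operator inside the Berezin norm on the left becomes $\frac12\big((A^*)^*B + (\pm B^*)^*A\big) = \frac12(AB \pm BA)$, which is exactly the quantity we want. On the right-hand side the first factor then involves $|A^*|^{2r}$ and $|\pm B^*|^{2r}$, while the second involves $|B|^{2s}$ and $|A|^{2s}$; here one uses the elementary identity $|\pm B^*| = |B^*|$, so the sign disappears and both choices of sign are treated simultaneously. This gives
\[
\left\|\frac{AB \pm BA}{2}\right\|^{2}_{ber} \leq \textbf{ber}^{1/r}\left(\frac{|A^*|^{2r} +|B^*|^{2r}}{2}\right) \textbf{ber}^{1/s}\left(\frac{|A|^{2s}+|B|^{2s}}{2}\right).
\]
Since $r$ and $s$ range independently over $[1,\infty)$, interchanging the labels $r \leftrightarrow s$ (and using commutativity of the product on the right) yields precisely the asserted inequality~(i).

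For part (ii), I would specialize (i) to $s = r$, so that the right-hand side becomes a product $\textbf{ber}^{1/r}(\cdots)\,\textbf{ber}^{1/r}(\cdots)$, and then raise both sides to the $r$-th power: the exponent $2$ on the left becomes $2r$, and each $1/r$ power on the right becomes $1$, which is exactly the claimed inequality.

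The only point needing attention is not a genuine obstacle but a matter of care: the notational collision between the ambient operators $A,B$ of this corollary and the dummy operators $A,B,C,D$ appearing in Corollary \ref{cor3}, together with the fact that the two factors on the right emerge with $r$ and $s$ swapped relative to the stated form. Because every inequality here is asserted for all $r,s \geq 1$, this relabeling is harmless, and the substitution closes the proof at once.
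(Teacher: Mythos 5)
Your proposal is correct and is exactly the paper's argument: the paper derives this corollary from Corollary \ref{cor3} via the same substitution $A=A^*$, $B=B$, $C=\pm B^*$, $D=A$, and your extra care about the $r\leftrightarrow s$ relabeling and the passage from (i) to (ii) by setting $s=r$ and raising to the $r$-th power simply fills in details the paper leaves implicit.
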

	
	In particular, for $B=A^*$ in Corollary \ref{cor8} (ii), we have
		\begin{align}\label{eqn6}
		\left\|AA^*\pm A^*A\right\|^{r}_{ber} \leq 2^{r-1}\textbf{ber}\left((A^*A)^r +(AA^*)^r\right)\,\, \text{for all $r \geq 1.$}
		\end{align}
	
		Moreover, for $r=1$ in (\ref{eqn6}), we have
		\begin{align}\label{eql1}
		   \left\|AA^*- A^*A\right\|_{ber} \leq \textbf{ber}\left(A^*A +AA^*\right) = \left\|AA^*+A^*A\right\|_{ber}.
		\end{align}



In the following theorem we obtain an upper bound for the  Berezin norm of  the sum of the product of two positive operators.

\begin{theorem}\label{theo2}
		Let $A,B\in \mathbb{B}(\mathscr{H})$ be positive. Then 
		\begin{eqnarray*}
			\left\|\frac{A^{\alpha}B^{1-\alpha}+A^{1-\alpha}B^{\alpha}}{2}\right\|^{2}_{ber} \leq \textbf{ber}^{1/r}\left(\frac{A^{2\alpha r} +A^{2(1-\alpha)r}}{2}\right) \textbf{ber}^{1/s}\left(\frac{B^{2\alpha s} +B^{2(1-\alpha)s}}{2}\right),
    	\end{eqnarray*}
		 for all $r,s \geq 1$ and for all $\alpha \in [0,1].$
\end{theorem}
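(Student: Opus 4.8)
The plan is to deduce this directly from Corollary \ref{cor3} by choosing the four operators there to be suitable fractional powers of $A$ and $B$. Since $A$ and $B$ are positive and $\alpha\in[0,1]$, the operators $A^{\alpha}$, $A^{1-\alpha}$, $B^{\alpha}$, $B^{1-\alpha}$ are all well-defined through the continuous functional calculus and are themselves positive, hence self-adjoint (with the usual convention $P^{0}=I$ taking care of the endpoints $\alpha=0,1$).

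The one preliminary fact I would record is that a positive operator equals its own modulus: for positive $P$ we have $|P|=(P^{*}P)^{1/2}=(P^{2})^{1/2}=P$. Applying this to the powers above gives the identities $|A^{\alpha}|^{2r}=(A^{\alpha})^{2r}=A^{2\alpha r}$ and $|A^{1-\alpha}|^{2r}=A^{2(1-\alpha)r}$, together with $|B^{\alpha}|^{2s}=B^{2\alpha s}$ and $|B^{1-\alpha}|^{2s}=B^{2(1-\alpha)s}$. These are precisely what is needed to convert the absolute-value terms on the right-hand side of Corollary \ref{cor3} into the clean powers of $A$ and $B$ displayed in the statement.

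With this in hand, I would invoke Corollary \ref{cor3} under the substitution $A\mapsto A^{\alpha}$, $B\mapsto B^{1-\alpha}$, $C\mapsto A^{1-\alpha}$, $D\mapsto B^{\alpha}$. Because $A^{\alpha}$ and $A^{1-\alpha}$ are self-adjoint, the inner product in the left-hand numerator simplifies as $(A^{\alpha})^{*}B^{1-\alpha}+(A^{1-\alpha})^{*}B^{\alpha}=A^{\alpha}B^{1-\alpha}+A^{1-\alpha}B^{\alpha}$, so the left-hand side becomes exactly $\left\|\frac{A^{\alpha}B^{1-\alpha}+A^{1-\alpha}B^{\alpha}}{2}\right\|^{2}_{ber}$. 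Reading off the right-hand side of Corollary \ref{cor3} and inserting the modulus identities from the previous paragraph yields $\textbf{ber}^{1/r}\left(\frac{A^{2\alpha r}+A^{2(1-\alpha)r}}{2}\right)\textbf{ber}^{1/s}\left(\frac{B^{2\alpha s}+B^{2(1-\alpha)s}}{2}\right)$, which is the claimed bound.

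There is essentially no hard step once the substitution is spotted; the result is immediate from Corollary \ref{cor3}. The only points demanding a little care are the bookkeeping of which power of $A$ (respectively $B$) lands in the first (respectively second) \textbf{ber}-factor under the substitution, and the elementary but essential observation that a positive operator coincides with its modulus, which is what collapses the $|\cdot|^{2r}$ and $|\cdot|^{2s}$ terms into ordinary powers.
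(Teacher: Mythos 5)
Your proof is correct, and it takes a genuinely different (and shorter) route than the paper. The paper proves Theorem \ref{theo2} from scratch: it fixes normalized reproducing kernels $\hat{k}_{\lambda},\hat{k}_{\mu}$, applies the triangle inequality, the Cauchy--Schwarz inequality, the scalar estimate $(ab+cd)^2\leq(a^2+c^2)(b^2+d^2)$, the power-mean inequality (Lemma \ref{lemma3}) and the McCarthy inequality (Lemma \ref{lemma1}), and then takes suprema --- that is, it replays, for the particular operators $A^{\alpha},A^{1-\alpha},B^{\alpha},B^{1-\alpha}$, exactly the chain of estimates that proves Theorem \ref{theo1}. You instead observe that Theorem \ref{theo2} is a formal specialization of the already-established Corollary \ref{cor3}: substituting $A\mapsto A^{\alpha}$, $B\mapsto B^{1-\alpha}$, $C\mapsto A^{1-\alpha}$, $D\mapsto B^{\alpha}$, and using that a positive operator is self-adjoint and equals its own modulus (so $(A^{\alpha})^{*}=A^{\alpha}$ and $|A^{\alpha}|^{2r}=A^{2\alpha r}$, etc.), gives precisely the claimed inequality; your bookkeeping of which powers land with which exponent is also right (the $A$-terms acquire $r$, the $B$-terms acquire $s$, matching the statement up to the harmless reordering of $B^{2(1-\alpha)s}+B^{2\alpha s}$). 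There is no circularity, since Corollary \ref{cor3} is derived from Theorem \ref{theo1} before and independently of Theorem \ref{theo2}, and derivation-by-substitution is exactly the device the paper itself uses for its other corollaries (e.g.\ Corollary \ref{cor6}). What your route buys is economy and the structural insight that Theorem \ref{theo2} is really a corollary of Theorem \ref{theo1}; what the paper's direct computation buys is only self-containment, at the cost of duplicating the argument.
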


\begin{proof}
		Suppose that $\hat{k}_{\lambda}$ and $\hat{k}_{\mu}$ are two normalized reproducing kernel of $\mathscr{H}.$ Then
		\begin{eqnarray*}
			&&|\langle (A^{\alpha}B^{1-\alpha}+A^{1-\alpha}B^{\alpha})\hat{k}_{\lambda},\hat{k}_{\mu} \rangle|^2\\& \leq & (|\langle A^{\alpha}B^{1-\alpha} \hat{k}_{\lambda},\hat{k}_{\mu} \rangle|+|\langle A^{1-\alpha}B^{\alpha} \hat{k}_{\lambda},\hat{k}_{\mu} \rangle|)^2\\& = & (|\langle B^{1-\alpha} \hat{k}_{\lambda},A^{\alpha} \hat{k}_{\mu} \rangle|+|\langle B^{\alpha} \hat{k}_{\lambda},A^{1-\alpha} \hat{k}_{\mu} \rangle|)^2\\ & \leq & (\| B^{1-\alpha} \hat{k}_{\lambda}\|\|A^{\alpha} \hat{k}_{\mu}\|+\| B^{\alpha} \hat{k}_{\lambda}\|\|A^{1-\alpha} \hat{k}_{\mu}\|)^2\\&=& \left(\langle A^{2\alpha} \hat{k}_{\mu},\hat{k}_{\mu} \rangle^{1/2}\langle B^{2(1-\alpha)} \hat{k}_{\lambda},\hat{k}_{\lambda} \rangle^{1/2}+\langle A^{2(1-\alpha)} \hat{k}_{\mu},\hat{k}_{\mu} \rangle^{1/2}\langle B^{2\alpha} \hat{k}_{\lambda},\hat{k}_{\lambda} \rangle^{1/2} \right)^2\\ &\leq& 4\left(\frac{\langle A^{2\alpha} \hat{k}_{\mu},\hat{k}_{\mu} \rangle + \langle A^{2(1-\alpha)} \hat{k}_{\mu},\hat{k}_{\mu} \rangle}{2} \right)\left(\frac{\langle B^{2\alpha} \hat{k}_{\lambda},\hat{k}_{\lambda} \rangle+\langle B^{2(1-\alpha)} \hat{k}_{\lambda},\hat{k}_{\lambda} \rangle}{2} \right)\\&& \Big(\mbox{using the inequality $(ab+cd)^2\leq(a^2+c^2)(b^2+d^2)$ for all $a,b,c,d \in \mathbb{R}$}\Big)\\ &\leq& 4\left(\frac{\langle A^{2\alpha} \hat{k}_{\mu},\hat{k}_{\mu} \rangle^r + \langle A^{2(1-\alpha)} \hat{k}_{\mu},\hat{k}_{\mu} \rangle^r}{2} \right)^{1/r}\left(\frac{\langle B^{2\alpha} \hat{k}_{\lambda},\hat{k}_{\lambda} \rangle^s+\langle B^{2(1-\alpha)} \hat{k}_{\lambda},\hat{k}_{\lambda} \rangle^s}{2} \right)^{1/s}\\&&\,\,\,\,\,\,\,\, \,\,\,\,\,\,\,\,\,\, \Big(\mbox{by Lemma \ref{lemma3}}\Big)\\ &\leq& 4\left(\frac{\langle A^{2\alpha r} \hat{k}_{\mu},\hat{k}_{\mu} \rangle + \langle A^{2(1-\alpha)r} \hat{k}_{\mu},\hat{k}_{\mu} \rangle}{2} \right)^{1/r}\left(\frac{\langle B^{2\alpha s} \hat{k}_{\lambda},\hat{k}_{\lambda} \rangle+\langle B^{2(1-\alpha)s} \hat{k}_{\lambda},\hat{k}_{\lambda} \rangle}{2} \right)^{1/s}\\&&\,\,\,\,\,\,\,\,\,\,\,\,\,\,\,\,\,\,\,\, \Big(\mbox{by Lemma \ref{lemma1}}\Big)\\&=&
			4 \left\langle \left(\frac{A^{2\alpha r} +A^{2(1-\alpha)r}}{2}\right) \hat{k}_{\mu},\hat{k}_{\mu}\right \rangle^{1/r}\left\langle \left(\frac{B^{2\alpha s} +B^{2(1-\alpha)s}}{2}\right) \hat{k}_{\lambda},\hat{k}_{\lambda}\right \rangle^{1/s} \\ &\leq& 4 \, \textbf{ber}^{1/r}\left(\frac{A^{2\alpha r} +A^{2(1-\alpha)r}}{2}\right) \textbf{ber}^{1/s}\left(\frac{B^{2\alpha s} +B^{2(1-\alpha)s}}{2}\right).
		\end{eqnarray*}
		Therefore, taking the supremum over all $\lambda,\mu\in\Omega$, we get the desired inequality.
\end{proof}

Considering $s=r$ in Theorem \ref{theo2}, we infer the following corollary.

\begin{cor}\label{cor9}
	Let $A,B\in \mathbb{B}(\mathscr{H})$ be positive. Then 
	\begin{eqnarray}\label{eqn11}
		\left\|A^{\alpha}B^{1-\alpha}+A^{1-\alpha}B^{\alpha}\right\|^{2r}_{ber} \leq 2^{2r-2}\textbf{ber}\left(A^{2\alpha r} +A^{2(1-\alpha)r}\right) \textbf{ber}\left(B^{2\alpha r} +B^{2(1-\alpha)r}\right),
	\end{eqnarray}
	for all $r\geq 1$ and for all $\alpha \in [0,1].$
	
	 In particular, for $\alpha=\frac{1}{2}$ and $r=1$, we have
	\begin{eqnarray}\label{eqn12}
	\left\|A^{1/2}B^{1/2}\right\|_{ber} \leq {\textbf{ber}^{1/2}(A)\textbf{ber}^{1/2}(B)}.
	\end{eqnarray}

	Moreover, if $AB=BA$, then 
	\begin{eqnarray}\label{eqn13}
   \left \|\sqrt{AB} \right\|_{ber} \leq \sqrt{\textbf{ber}(A)\,\textbf{ber}(B)}.
	\end{eqnarray}
\end{cor}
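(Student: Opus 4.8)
The plan is to derive all three displayed inequalities directly from Theorem~\ref{theo2} by specializing the parameters, so no fresh estimate is needed. For \eqref{eqn11} I would set $s=r$ in the conclusion of Theorem~\ref{theo2}, which yields
\[
\left\|\frac{A^{\alpha}B^{1-\alpha}+A^{1-\alpha}B^{\alpha}}{2}\right\|^{2}_{ber} \leq \textbf{ber}^{1/r}\left(\frac{A^{2\alpha r}+A^{2(1-\alpha)r}}{2}\right)\textbf{ber}^{1/r}\left(\frac{B^{2\alpha r}+B^{2(1-\alpha)r}}{2}\right),
\]
and then raise both sides to the power $r$. The only work here is constant-bookkeeping via the homogeneity properties $\|\lambda T\|_{ber}=|\lambda|\,\|T\|_{ber}$ and $\textbf{ber}(\lambda T)=|\lambda|\,\textbf{ber}(T)$ listed in the introduction: pulling the factor $\tfrac12$ out of the norm on the left produces $2^{-2r}\|A^{\alpha}B^{1-\alpha}+A^{1-\alpha}B^{\alpha}\|^{2r}_{ber}$, while on the right each $\textbf{ber}^{1/r}(\tfrac12\,\cdot)$ raised to the $r$ contributes a factor $\tfrac12$, giving an overall $\tfrac14\,\textbf{ber}(A^{2\alpha r}+A^{2(1-\alpha)r})\,\textbf{ber}(B^{2\alpha r}+B^{2(1-\alpha)r})$. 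Multiplying through by $2^{2r}$ converts the prefactor to $2^{2r}/4=2^{2r-2}$, which is exactly \eqref{eqn11}.

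For \eqref{eqn12} I would substitute $\alpha=\tfrac12$ and $r=1$ into \eqref{eqn11}. Then $2\alpha r = 2(1-\alpha)r = 1$, so both bracketed operators on the right collapse to $A+A=2A$ and $B+B=2B$, and the operator inside the norm becomes $2A^{1/2}B^{1/2}$. Using homogeneity once more, the left side is $4\|A^{1/2}B^{1/2}\|^{2}_{ber}$ and the right side is $2^{0}\,\textbf{ber}(2A)\,\textbf{ber}(2B)=4\,\textbf{ber}(A)\,\textbf{ber}(B)$; cancelling the common factor $4$ and taking square roots yields \eqref{eqn12}.

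Finally, for \eqref{eqn13} the one genuinely structural point is to identify $A^{1/2}B^{1/2}$ with $\sqrt{AB}$ when $AB=BA$. I would argue that commuting positive operators have commuting square roots (each square root is a norm-limit of polynomials in the respective operator), so $A^{1/2}$ and $B^{1/2}$ commute; consequently $A^{1/2}B^{1/2}$ is positive and $(A^{1/2}B^{1/2})^{2}=AB$, whence $A^{1/2}B^{1/2}=\sqrt{AB}$ by uniqueness of the positive square root. Substituting this identity into \eqref{eqn12} gives \eqref{eqn13}. The main obstacle in the whole argument is simply keeping the powers of $2$ consistent through the homogeneity steps; the commutativity-of-square-roots fact in the last part is the only conceptual ingredient and is standard.
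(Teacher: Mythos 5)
Your proposal is correct and takes essentially the same route as the paper: Corollary \ref{cor9} is obtained simply by setting $s=r$ in Theorem \ref{theo2} and raising to the power $r$. Your bookkeeping of the powers of $2$ and the identification $A^{1/2}B^{1/2}=\sqrt{AB}$ for commuting positive operators are exactly the routine details the paper leaves implicit, and both are carried out correctly.
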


Following Proposition \ref{prop1}, since $\textbf{ber} (\sqrt{AB}) = \| \sqrt{AB} \|_{ber},$  the inequality (\ref{eqn13}) is same as the existing inequality \cite[Cor. 2.10.]{Trd}, namely,
		$\textbf{ber} (\sqrt{AB}) \leq \sqrt{\textbf{ber}(A)\textbf{ber}(B)}$, where $A,B\in \mathbb{B}(\mathscr{H})$ with $A,B \geq 0$ and $AB=BA.$   \\

For  $A,B\in \mathbb{B}(\mathscr{H})$ and $\alpha \in [0,1]$, the  $\alpha$-weighted arithmetic mean of $A$ and $B$ is given by $\alpha A + (1-\alpha)B.$
Now, in the following theorem we obtain an upper bound for the Berezin norm for $\alpha$-weighted arithmetic mean of two opetators.

\begin{theorem}\label{theo3}
	Let $A,B\in \mathbb{B}(\mathscr{H})$. Then
	\begin{eqnarray*}
		\|\alpha A + (1-\alpha)B\|^2_{ber} \leq \textbf{ber} \left(\alpha^2|A|^2+(1-\alpha)^2|B|^2\right)+2\alpha(1-\alpha)\textbf{ber}(B^*A),
	\end{eqnarray*}
	for all $\alpha \in [0,1].$
\end{theorem}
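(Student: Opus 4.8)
The plan is to follow the template of the earlier proofs: fix two normalized reproducing kernels $\hat{k}_{\lambda}$ and $\hat{k}_{\mu}$, estimate $|\langle(\alpha A+(1-\alpha)B)\hat{k}_{\lambda},\hat{k}_{\mu}\rangle|^2$ pointwise, and then pass to the supremum over $\lambda,\mu\in\Omega$. Writing $T=\alpha A+(1-\alpha)B$, the starting move is the Cauchy--Schwarz inequality together with $\|\hat{k}_{\mu}\|=1$, which gives $|\langle T\hat{k}_{\lambda},\hat{k}_{\mu}\rangle|^2\leq\|T\hat{k}_{\lambda}\|^2=\langle T^*T\hat{k}_{\lambda},\hat{k}_{\lambda}\rangle$. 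This already removes the dependence on $\mu$, so the whole problem reduces to controlling the diagonal Berezin symbol of the single positive operator $T^*T$.

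Next I would expand $T^*T$. A direct computation gives $T^*T=\alpha^2|A|^2+(1-\alpha)^2|B|^2+\alpha(1-\alpha)(A^*B+B^*A)$. Since $A^*B+B^*A=B^*A+(B^*A)^*=2\Re(B^*A)$, evaluating against $\hat{k}_{\lambda}$ produces the cross term $2\alpha(1-\alpha)\Re\langle B^*A\hat{k}_{\lambda},\hat{k}_{\lambda}\rangle$, which I bound above by $2\alpha(1-\alpha)|\langle B^*A\hat{k}_{\lambda},\hat{k}_{\lambda}\rangle|$ using $\Re z\leq|z|$. Thus
\[
\langle T^*T\hat{k}_{\lambda},\hat{k}_{\lambda}\rangle\leq\big\langle(\alpha^2|A|^2+(1-\alpha)^2|B|^2)\hat{k}_{\lambda},\hat{k}_{\lambda}\big\rangle+2\alpha(1-\alpha)\big|\langle B^*A\hat{k}_{\lambda},\hat{k}_{\lambda}\rangle\big|.
\]

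Finally I would take the supremum over $\lambda$ (the variable $\mu$ having already been absorbed by the Cauchy--Schwarz step). The operator $\alpha^2|A|^2+(1-\alpha)^2|B|^2$ is positive, so its Berezin symbol is nonnegative and its supremum is exactly $\textbf{ber}(\alpha^2|A|^2+(1-\alpha)^2|B|^2)$, while the supremum of $|\langle B^*A\hat{k}_{\lambda},\hat{k}_{\lambda}\rangle|$ is $\textbf{ber}(B^*A)$ by definition; bounding the supremum of the sum by the sum of the suprema then delivers the claimed inequality. There is no substantive obstacle here: the only points needing a line of justification are the Cauchy--Schwarz step, where $\|\hat{k}_{\mu}\|=1$ is precisely what makes the right-hand side independent of $\mu$, and the observation that positivity of $\alpha^2|A|^2+(1-\alpha)^2|B|^2$ lets its Berezin number be read off without the modulus. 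Everything else is the algebraic expansion of $T^*T$ and the elementary estimate $\Re z\leq|z|$.
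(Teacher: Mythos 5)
Your proposal is correct and follows essentially the same route as the paper's own proof: Cauchy--Schwarz with $\|\hat{k}_{\mu}\|=1$ to eliminate $\mu$, expansion of $\|(\alpha A+(1-\alpha)B)\hat{k}_{\lambda}\|^2$ into the quadratic terms plus the cross term, the bound $\Re\langle B^*A\hat{k}_{\lambda},\hat{k}_{\lambda}\rangle\leq|\langle B^*A\hat{k}_{\lambda},\hat{k}_{\lambda}\rangle|$, and a final supremum over $\lambda$. The only cosmetic difference is that you phrase the expansion as the operator identity for $T^*T$ while the paper expands the inner product directly; these are the same computation.
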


\begin{proof}
	Let $\hat{k}_{\lambda}$ and $\hat{k}_{\mu}$ be two normalized reproducing kernel of $\mathscr{H}.$ Then
	\begin{eqnarray*}
		&&|\langle (\alpha A + (1-\alpha)B)\hat{k}_{\lambda},\hat{k}_{\mu}\rangle|^2 \\& \leq & \|(\alpha A + (1-\alpha)B)\hat{k}_{\lambda}\|^2 \\ &=& \langle (\alpha A + (1-\alpha)B)\hat{k}_{\lambda}, (\alpha A + (1-\alpha)B)\hat{k}_{\lambda} \rangle \\ &=& \alpha^2 \langle A\hat{k}_{\lambda},A\hat{k}_{\lambda} \rangle + (1-\alpha)^2\langle B\hat{k}_{\lambda},B\hat{k}_{\lambda} \rangle + 2\alpha(1-\alpha) \textit{Re} \langle A\hat{k}_{\lambda},B\hat{k}_{\lambda} \rangle \\ & \leq & \alpha^2 \langle |A|^2 \hat{k}_{\lambda},\hat{k}_{\lambda} \rangle + (1-\alpha)^2\langle |B|^2\hat{k}_{\lambda},\hat{k}_{\lambda} \rangle + 2\alpha(1-\alpha) |\langle B^*A\hat{k}_{\lambda},\hat{k}_{\lambda} \rangle|\\ &\leq & \textbf{ber}\left(\alpha^2|A|^2+(1-\alpha)^2|B|^2\right)+2\alpha(1-\alpha)\textbf{ber}(B^*A).
	\end{eqnarray*}
	Therefore, taking the supremum over all $\lambda,\mu\in\Omega$, we get 
	\begin{eqnarray*}
		\|\alpha A + (1-\alpha)B\|^2_{ber} \leq \textbf{ber}\left (\alpha^2|A|^2+(1-\alpha)^2|B|^2\right)+2\alpha(1-\alpha)\textbf{ber}(B^*A).
	\end{eqnarray*}
\end{proof}

Putting $\alpha=\frac{1}{2}$ in Theorem \ref{theo3}, we get the following corollary which presents upper bound for the Berezin norm of the sum of two operators.

\begin{cor}
		Let $A,B\in \mathbb{B}(\mathscr{H})$. Then
	\begin{eqnarray*}
		\| A + B\|^2_{ber} \leq \textbf{ber}\left (|A|^2+|B|^2\right)+2 \, \textbf{ber}(B^*A).
	\end{eqnarray*}
\end{cor}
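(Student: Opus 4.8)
The plan is to obtain this corollary as a direct specialization of Theorem \ref{theo3}, so the entire argument amounts to substituting $\alpha=\tfrac{1}{2}$ and rescaling. I would begin by setting $\alpha=\tfrac{1}{2}$ in the inequality of Theorem \ref{theo3}, which immediately yields
\[
\left\|\tfrac{1}{2}A+\tfrac{1}{2}B\right\|^2_{ber}
\leq \textbf{ber}\!\left(\tfrac{1}{4}|A|^2+\tfrac{1}{4}|B|^2\right)+2\cdot\tfrac{1}{2}\cdot\tfrac{1}{2}\,\textbf{ber}(B^*A).
\]

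Next I would clear the factors of $\tfrac{1}{2}$ using the positive-homogeneity of both the Berezin norm and the Berezin number, namely properties (iv) and (i) recorded in the introduction. On the left, $\|\tfrac{1}{2}(A+B)\|_{ber}=\tfrac{1}{2}\|A+B\|_{ber}$, so the left-hand side equals $\tfrac{1}{4}\|A+B\|^2_{ber}$. On the right, property (i) gives $\textbf{ber}\!\left(\tfrac{1}{4}(|A|^2+|B|^2)\right)=\tfrac{1}{4}\,\textbf{ber}\!\left(|A|^2+|B|^2\right)$, while the numerical coefficient $2\alpha(1-\alpha)$ reduces to $\tfrac{1}{2}$. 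The displayed inequality therefore becomes
\[
\tfrac{1}{4}\|A+B\|^2_{ber}\leq \tfrac{1}{4}\,\textbf{ber}\!\left(|A|^2+|B|^2\right)+\tfrac{1}{2}\,\textbf{ber}(B^*A).
\]

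Finally I would multiply both sides by $4$ to arrive at the asserted bound $\|A+B\|^2_{ber}\leq \textbf{ber}(|A|^2+|B|^2)+2\,\textbf{ber}(B^*A)$. There is no genuine obstacle here: the result is a clean scalar specialization, and the only point requiring any care is keeping track of the homogeneity factors so that the $\tfrac{1}{4}$ on the left correctly matches the $\tfrac{1}{4}$ and $\tfrac{1}{2}$ on the right before the common factor is cleared.
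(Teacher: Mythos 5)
Your proof is correct and is exactly the paper's argument: the corollary is obtained by putting $\alpha=\tfrac{1}{2}$ in Theorem \ref{theo3} and then clearing the common scalar using the homogeneity of $\|\cdot\|_{ber}$ and $\textbf{ber}(\cdot)$. Your bookkeeping of the factors $\tfrac{1}{4}$ and $\tfrac{1}{2}$ simply makes explicit what the paper states in one line ("Putting $\alpha=\tfrac{1}{2}$ in Theorem \ref{theo3}").
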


\begin{remark}
The following inequalities for the sum of the product of operators defined on a complex Hilbert space can be obtained using analogous argument as described in Theorems \ref{theo1},  \ref{theo2} and \ref{theo3}.\\
(i) Let $A,B,C,D,X,Y\in \mathbb{B}(\mathbb{H})$, and let $\alpha \in [0,1].$ Then
	\begin{align*}
	&\left\|\frac{A^*XB+C^*YD}{2}\right\|^{2} \\& \leq  \left\| \frac{(B^*|X|^{2\alpha }B)^r+(D^*|Y|^{2\alpha }D)^r}{2}\right\|^{1/r} \left\|\frac{(A^*|X^*|^{2(1-\alpha)}A)^s +(C^*|Y^*|^{2(1-\alpha)}C)^s}{2}\right\|^{1/s},
	\end{align*}
	for all $r,s \geq 1.$ \\
(ii) Let $A,B,C,D\in \mathbb{B}(\mathbb{H})$. Then
 	\begin{align*}\label{eq22}
	\left\|\frac{A^*B+C^*D}{2}\right\|^{2} \leq \left\| \frac{(B^*B)^r+(D^*D)^r}{2}\right\|^{1/r}\left\|\frac{(A^*A)^s +(C^*C)^s}{2}\right\|^{1/s}, 
	\end{align*} 
for all $r,s\geq 1$.\\ 
(iii) Let $A,B\in \mathbb{B}(\mathbb{H})$ be positive. Then 
	\begin{eqnarray*}
		\left\|\frac{A^{\alpha}B^{1-\alpha}+A^{1-\alpha}B^{\alpha}}{2}\right\|^{2} \leq \left \|\frac{A^{2\alpha r} +A^{2(1-\alpha)r}}{2}\right \|^{1/r} \left \| \frac{B^{2\alpha s} +B^{2(1-\alpha)s}}{2}\right \|^{1/s},
	\end{eqnarray*}
	for all $r,s \geq 1$ and for all $\alpha \in [0,1].$\\
(iv) Let $A,B\in \mathbb{B}(\mathbb{H})$. Then
	\begin{eqnarray*}
		\|\alpha A + (1-\alpha)B\|^2 \leq  \left \|\alpha^2|A|^2+(1-\alpha)^2|B|^2\right \|+2\alpha(1-\alpha)w(B^*A),
	\end{eqnarray*}
	for all $\alpha \in [0,1].$

Note that the inequality obtained in (ii) was developed independently in \cite[Th. 3.]{drag}.

\end{remark}

\noindent \textbf{Declarations.}\\
Authors declare that data sharing is not applicable to this article as no datasets were generated or analysed during the current study.

\bibliographystyle{amsplain}

\end{document}